\numberwithin{equation}{section}
\newtheorem{theorem}{Theorem}[section]
\newtheorem{lemma}[theorem]{Lemma}
\newtheorem{proposition}{Proposition}[section]
\theoremstyle{definition}
\newtheorem{remark}{Remark}[section]
\newcommand{\abs}[1]{\left\vert#1\right\vert}
\newcommand{\norm}[1]{\left\Vert#1\right\Vert}
\newcommand{\inner}[1]{\left(#1\right)}
\newcommand{\Rmnum}[1]{\expandafter\@slowromancap\romannumeral #1@}
\title[Vanishing viscosity limit of Navier-Stokes Equations in Gevrey class]
{Vanishing viscosity limit of  Navier-Stokes \\
 Equations in Gevrey class}
\author{Feng Cheng, Wei-Xi Li and Chao-Jiang Xu}
\date{}
\address{\noindent \textsc{Feng Cheng, School of Mathematics and Statistics, Wuhan university 430072, Wuhan, P.R. China}}
\email{chengfengwhu@whu.edu.cn}
\address{\noindent \textsc{Wei-Xi Li, School of Mathematics and Statistics,    and Computational Science Hubei Key Laboratory, Wuhan university 430072, Wuhan, P.R. China}}
\email{wei-xi.li@whu.edu.cn}
\address{\noindent \textsc{Chao-Jiang Xu,
School of Mathematics and Statistics, Wuhan university 430072, Wuhan, P.R. China\\
and\\
Universit\'e de Rouen, CNRS UMR 6085, Laboratoire de Math\'ematiques, 76801 Saint-Etienne du Rouvray, France}}
\email{Chao-Jiang.Xu@univ-rouen.fr}
\begin{document}

\keywords{Gevrey class, Incompressible Navier Stokes equation, Vanishing viscosity limit}
\subjclass[2010]{35M30, 76D03, 76D05}

\begin{abstract}
  In this paper we  consider the inviscid limit  for the periodic solutions to  Navier-Stokes equation in the  framework of Gevrey class.   It is shown that the lifespan for the solutions to Navier-Stokes equation is  independent of viscosity, and that the solutions of the Navier-Stokes equation converge to that of Euler equation in Gevrey class as the viscosity tends to zero. Moreover the convergence rate in Gevrey class is presented.
\end{abstract}

\maketitle

\section{Introduction}\label{section1}

The Navier-Stokes equations for incompressible viscous flow in $\mathbb{T}^3=(-\pi,\pi)^3$ read
\begin{equation}\label{1.1}
\left\{
\begin{aligned}
\frac{\partial u^\nu}{\partial t}-\nu \Delta u^\nu+(u^\nu\cdot\nabla)u^\nu+\nabla p^\nu&=0,\\
\nabla\cdot u^\nu&=0,\\
u^\nu|_{t=0}&=a,
\end{aligned}
\right.
\end{equation}
where $u^\nu(t,x)=(u^\nu_1,u^\nu_2,u^\nu_3)(t,x)$ is the unknown velocity vector function at point $x\in\mathbb{T}^3$ and  time $t$, $p^\nu(t,x)$ is the unknown scalar pressure function, $\nu>0$ is the kenematic viscosity, $a(x)=(a_1,a_2,a_3)(x)$ is the given initial data.

If the viscosity $\nu=0$, the equations \eqref{1.1} become the Euler equations for ideal flow with the same given initial data $a$,
\begin{equation}\label{1.2}
\left\{
\begin{aligned}
\frac{\partial u}{\partial t}+(u\cdot\nabla)u+\nabla p&=0,\\
\nabla\cdot u&=0,\\
u|_{t=0}&=a,
\end{aligned}
\right.
\end{equation}
where we denote the unknown vector velocity function to be $u(t,x)=(u_1,u_2,u_3)(t,x)$ and the unknown scalar pressure function to be $p(t,x)$.

The existence and uniqueness of  solutions to \eqref{1.1} and \eqref{1.2} in
Sobolev space $H^r(\mathbb{R}^3)$  for $r>3/2+1$, on a maximal time interval $[0,T_\ast)$ is classical in \cite{BB,TK,RT1}. There are abundant studies on the analyticities of solutions to \eqref{1.1} and \eqref{1.2} in various methods, for reference in \cite{BB1,BS,C,GK,samm}.
The Gevrey regularity of solutions to Navier-Stokes equations was started by Foias and Temam in their work \cite{FT}, in which the authors developed a way to prove the Gevrey class regularity by characterizing the decay of their Fourier coefficients. And later \cite{IK,IK1,KV,KV1,LO} developed this method to study the Gevrey class regularity of Euler equations in various conditions.

The subject of inviscid limits of solutions to Navier-Stokes equations has a long history and there is a vast literature on it, investigating this problem in various functional settings,  cf.  \cite{TK1,RT3} and references therein. Briefly, convergence of smooth solutions in $\mathbb R^n$ or torus  is well developed (cf.  \cite{TK,HS} for instance).  Much less is known about convergence   in a domain with boundaries.  In fact
the vanishing viscosity limit for the incompressible Navier-Stokes equations, in the case where there exist physical boundaries, is still a challenging problem due to the appearance of the Prandtl boundary layer which is caused by the classical no-slip boundary condition.  So far the rigorous verification of the Prandtl boundary layer theory was achieved only for some specific settings, cf. \cite{awxy, e-2,GM, guo, lwx,oleinik-3, xin-zhang} for instance, not to mention the convergence to  Prandtl's equation  and Euler equations. Several  partial results on the inviscid limits, in the case of half-space, were given in \cite{samm}  by imposing analyticity on the initial data, and in \cite{maekawa} for vorticity admitting  compact support which is away from the boundary.   

On the other hand, the Prandtl boundary layer equation is  ill-posed in Sobolev space for many case (see \cite{e-2, GD, Liu-Y}), while the Sobolev space is the suitable function space for the energy theory of  fluid mechanic.  Since the verification of the Prandtl boundary layer theory meet the major obstacle in the setting of the Sobolev space, it will be interesting to expect the vanishing viscosity limit for the incompressible Navier-Stokes equations in the setting of Gevery space as sub-space of Sobolev space, see a series of works in this direction \cite{GM, lwx, LY}.  In fact, Gevrey space is an intermediate space between the space of analytic functions and the Sobolev space. On one hand, Gevrey functions enjoy similar properties  as analytic functions, and on the other hand, there are nontrivial Gevrey functions having compact support,  which is different from analytic functions.   As a preliminary attempt,  in this work we study  the vanishing viscosity limit of  the solution of Navier Stokes equation to the solution of Euler equation in Gevrey space.   Here we will concentrate on the torus, we hope this may give insights on the case when the domain has boundaries, which is a much more challenging problem.

We introduce the functions spaces as follows. We usually suppress the vector symbol for functions when no ambiguity arise. Let $\mathcal{L}^2(\mathbb{T}^3)$ be the vector function space
\begin{equation*}
\begin{aligned}
\mathcal{L}^2(\mathbb{T}^3)=\bigg\{u=\sum_{k\in\mathbb{Z}^3}\hat u_j e^{i \, k\cdot x};\,\,
                      \, \hat u_k=\overline{\hat u_{-k}},\,\,\, \hat u_0=0,\,\ &\,j\cdot\hat u_j=0, \\
                     & \norm{u}_{L^2}^2=\sum_{k\in\mathbb{Z}^3}\abs{\hat u_k}^2<\infty \bigg\},
\end{aligned}
\end{equation*}
where $\hat u_k$ is the $k$ th order Fourier coefficient of $u$, $i=\sqrt{-1}$. The condition $j\cdot\hat u_j=0$ means $\nabla\cdot u=0$ in the weak sense, so it is the standard $L^2$ space with the divergence free condition.
Let $\mathcal{H}^r(\mathbb{T}^3)$ be the vector periodic Sobolev space : for  $r\geq 1$,
\begin{equation*}
\begin{aligned}
\mathcal{H}^r(\mathbb{T}^3)=\bigg\{u=\sum_{k\in\mathbb{Z}^3}\hat u_j e^{i \, k\cdot x};\,\,
                     & \, \hat u_k=\overline{\hat u_{-k}},\,\,\, \hat u_0=0,
                     \,\,j\cdot\hat u_j=0\\
                     & \norm{u}_{{H}^r}^2=\sum_{k\in\mathbb{Z}^3}(1+\abs{k}^2)^{r}\abs{\hat u_k}^2<\infty \bigg\}.
\end{aligned}
\end{equation*}
Here the condition $j\cdot\hat u_j=0$ means $\nabla\cdot u=0$, so it is the standard Sobolev space $H^r$ with the divergence free condition.
Denote $\inner{\cdot,\cdot}$ the $L^2$ inner product of two vector functions.
Let us define the fractional differential operator $\Lambda=(-\Delta)^{1/2}$ and the
 exponential operator $e^{\tau\Lambda^{1/s}}$ as follows,
$$
\Lambda u=\sum_{j\in\mathbb{Z}^3}\abs{j}\hat u_j e^{i j\cdot x},\,\,\,
e^{\tau\Lambda^{1/s}}u=\sum_{j\in\mathbb{Z}^3}e^{\tau\abs{j}^{1/s}}\hat u_j e^{i j\cdot x}.
$$
The vector Gevrey space $\mathcal{G}^{s}_{r,\tau}$ for $s\geq1, \tau>0, r\in\mathbb{R}$ is
\begin{equation*}
\begin{aligned}
\mathcal{G}^{s}_{r,\tau}(\mathbb{T}^3)=\left\{ u\in \mathcal{H}^r(\mathbb{T}^3) ; \quad \norm{u}_{\mathcal{G}^{s}_{r,\tau}}^2=\sum_{j\in\mathbb{Z}^3}\abs{j}^{2r} e^{2\tau \abs{j}^{1/s}} \abs{\hat u_j}^2 <\infty \right\},
\end{aligned}
\end{equation*}
where the condition  $j\cdot \hat u_j=0$ means\, $\nabla\cdot u=0$, so  it is sub-space of the Sobolev space $\mathcal{H}^r(\mathbb{T}^3)$.

The following theorem is the main result of this paper.
\begin{theorem}\label{Theorem1}
Let $r>\frac{9}{2}, \tau_0>0,  s\ge 1 $. Assume that the initial data $ a\in \mathcal{G}^s_{r,\tau_0}(\mathbb{T}^3)$, then there exists $\nu_0>0$ and $T>0, \tau(t)>0$ is a decreasing function
such that, for any $0<\nu\le \nu_0$, the Navier-Stokes equations \eqref{1.1} admit the solutions
\begin{align*}
u^\nu\in L^\infty([0,T];\mathcal{G}^s_{r, \tau(\,\cdot\,)}(\mathbb{T}^3));\ \ p^\nu \in L^\infty([0,T];\mathcal{G}^s_{r+1,  \tau(\,\cdot\,)}(\mathbb{T}^3)),
\end{align*}
 and the Euler equations \eqref{1.2} admit the solution
 \begin{align*}
u\in L^\infty([0,T];\mathcal{G}^s_{r,\tau(\,\cdot\,)}(\mathbb{T}^3));\ \ p \in L^\infty([0,T];\mathcal{G}^s_{r+1,\tau(\,\cdot\,)}(\mathbb{T}^3)),
\end{align*}
Furthermore, we have the following convergence estimates : for any $ 0<t\leq T$
\begin{equation}\label{1.5}
 \norm{u^\nu(t,\cdot)-u(t,\cdot)}_{\mathcal{G}^s_{r-1,\tau(t)}}\leq C\sqrt{\nu},\quad \norm{p^\nu(t,\cdot)-p(t,\cdot)}_{\mathcal{G}^s_{r,\tau(t)}}\leq C\sqrt{\nu},
\end{equation}
where $C$ is a constant depending on $r,s,a$ and $T$.
\end{theorem}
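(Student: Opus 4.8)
The plan is to build Gevrey a priori estimates that are \emph{uniform in the viscosity} $\nu$, using the time-dependent Gevrey radius $\tau(t)$ as a free parameter and exploiting that the viscous term only improves the estimates. The same energy scheme, run with $\nu=0$, produces the Euler solution, so the existence of both solutions, the $\nu$-independence of the lifespan $T$, and the common radius $\tau(t)$ all come from one computation. Concretely, I would apply $\Lambda^r e^{\tau(t)\Lambda^{1/s}}$ to \eqref{1.1} and pair in $L^2$ with $\Lambda^r e^{\tau(t)\Lambda^{1/s}}u^\nu$, obtaining the energy identity
\begin{align*}
\frac12\frac{d}{dt}\norm{u^\nu}_{\mathcal{G}^s_{r,\tau}}^2 + \nu\norm{\Lambda^{r+1}e^{\tau\Lambda^{1/s}}u^\nu}_{L^2}^2 &= \dot\tau\,\norm{\Lambda^{r+\frac{1}{2s}}e^{\tau\Lambda^{1/s}}u^\nu}_{L^2}^2 \\
&\quad - \inner{\Lambda^re^{\tau\Lambda^{1/s}}(u^\nu\cdot\nabla)u^\nu,\ \Lambda^re^{\tau\Lambda^{1/s}}u^\nu},
\end{align*}
the pressure dropping out by $\nabla\cdot u^\nu=0$. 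Since $\tau$ is decreasing, the first right-hand term is a favorable ``Gevrey dissipation'' and the viscous term has a good sign; everything reduces to controlling the trilinear term.

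The main obstacle is precisely this trilinear estimate, and it is the technical heart of the paper. Its top-order part, with all of $\Lambda^re^{\tau\Lambda^{1/s}}$ on the differentiated factor, vanishes by incompressibility (it equals $\frac12\int u^\nu\cdot\nabla\abs{\Lambda^re^{\tau\Lambda^{1/s}}u^\nu}^2\,dx=0$), leaving a commutator $\com{\Lambda^re^{\tau\Lambda^{1/s}},\ u^\nu\cdot\nabla}u^\nu$. I would estimate this on the Fourier side via the subadditivity $\abs{k}^{1/s}\le\abs{k-l}^{1/s}+\abs{l}^{1/s}$ (valid exactly because $s\ge1$), which furnishes a gain of order $1/s$ on the low mode from the exponential difference, together with the usual polynomial gain and the bound $r>\frac92>\frac32$ for the convolution/$\ell^1$ sums, yielding
\begin{align*}
\abs{\inner{\Lambda^re^{\tau\Lambda^{1/s}}(u^\nu\cdot\nabla)u^\nu,\ \Lambda^re^{\tau\Lambda^{1/s}}u^\nu}} \le C\,\norm{u^\nu}_{\mathcal{G}^s_{r,\tau}}\,\norm{\Lambda^{r+\frac{1}{2s}}e^{\tau\Lambda^{1/s}}u^\nu}_{L^2}^2.
\end{align*}
Choosing $\tau(t)$ to solve $\dot\tau=-C\norm{u^\nu(t)}_{\mathcal{G}^s_{r,\tau(t)}}$ makes the coefficient $\dot\tau+C\norm{u^\nu}_{\mathcal{G}^s_{r,\tau}}$ vanish, so $\norm{u^\nu(t)}_{\mathcal{G}^s_{r,\tau(t)}}$ is non-increasing and $\tau(t)\ge\tau_0-C\norm{a}_{\mathcal{G}^s_{r,\tau_0}}t>0$ on a $\nu$-independent interval $[0,T]$ with $T\sim\tau_0/\norm{a}_{\mathcal{G}^s_{r,\tau_0}}$. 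A Galerkin truncation turns these bounds into genuine solutions, a standard difference estimate gives uniqueness, and setting $\nu=0$ gives the Euler solution with the same $T$ and $\tau$; the pressures are recovered at one higher level $r+1$ from the Leray projector.

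For the convergence I set $w=u^\nu-u$, $q=p^\nu-p$, which solve
\begin{align*}
\dd_t w-\nu\Delta w+(u^\nu\cdot\nabla)w+(w\cdot\nabla)u+\nabla q=\nu\Delta u,\qquad \nabla\cdot w=0,\qquad w|_{t=0}=0.
\end{align*}
Running the identical energy estimate \emph{one derivative lower}, with $\Lambda^{r-1}e^{\tau(t)\Lambda^{1/s}}$, the pressure again drops out, the transport and stretching terms are bounded by $C(\norm{u^\nu}_{\mathcal{G}^s_{r,\tau}}+\norm{u}_{\mathcal{G}^s_{r,\tau}})\norm{w}_{\mathcal{G}^s_{r-1,\tau}}^2$ plus a Gevrey-dissipation term absorbed as before (using the uniform bounds of the previous step), and the crucial forcing $\nu\Delta u$ is handled by integrating by parts,
\begin{align*}
\nu\inner{\Lambda^{r-1}e^{\tau\Lambda^{1/s}}\Delta u,\ \Lambda^{r-1}e^{\tau\Lambda^{1/s}}w} &\le \nu\,\norm{u}_{\mathcal{G}^s_{r,\tau}}\,\norm{\Lambda^{r}e^{\tau\Lambda^{1/s}}w}_{L^2} \\
&\le \frac{\nu}{2}\norm{\Lambda^{r}e^{\tau\Lambda^{1/s}}w}_{L^2}^2 + \frac{\nu}{2}\norm{u}_{\mathcal{G}^s_{r,\tau}}^2,
\end{align*}
so the first piece is swallowed by the viscous dissipation $\nu\norm{\Lambda^{r}e^{\tau\Lambda^{1/s}}w}_{L^2}^2$ of $w$ and only a harmless $O(\nu)$ source survives. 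This explains both features of the statement: one loses exactly one derivative so that only $u$ at level $r$ (never $r+1$) is needed, and Gr\"onwall's inequality with $w|_{t=0}=0$ then yields $\norm{w(t)}_{\mathcal{G}^s_{r-1,\tau(t)}}^2\le C\nu$, i.e.\ the rate $\sqrt\nu$.

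Finally, the pressure difference is a corollary of the velocity bound via the pressure Poisson equation: taking the divergence of the difference momentum equation and using $\nabla\cdot u^\nu=\nabla\cdot u=0$ gives $-\Delta q=\dd_i\dd_j(u_i^\nu w_j+w_iu_j)$, so $q=(-\Delta)^{-1}\dd_i\dd_j(u_i^\nu w_j+w_iu_j)$ with $(-\Delta)^{-1}\dd_i\dd_j$ a bounded order-zero Fourier multiplier preserving every $\mathcal{G}^s_{r,\tau}$. Because the quadratic difference $u_i^\nu u_j^\nu-u_iu_j=u_i^\nu w_j+w_iu_j$ is \emph{linear} in $w$, the Gevrey product (algebra) estimate together with the uniform bounds on $u^\nu,u$ transfers the rate $\sqrt\nu$ from $w$ to $q$, giving the second estimate in \eqref{1.5}. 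I expect the trilinear commutator estimate of the second paragraph---making the derivative loss summable against the Gevrey weight uniformly for every $s\ge1$---to be the genuine difficulty, whereas the convergence itself is comparatively soft once the uniform-in-$\nu$ Gevrey bounds are in place.
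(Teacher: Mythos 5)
Your velocity estimates are sound, but your pressure-convergence step has a genuine gap. You write $q=(-\Delta)^{-1}\dd_i\dd_j(u^\nu_i w_j+w_i u_j)$ and invoke the boundedness of the order-zero multiplier $(-\Delta)^{-1}\dd_i\dd_j$ on $\mathcal{G}^s_{r,\tau}$ together with the Gevrey algebra property. But that chain of inequalities yields
$\norm{q}_{\mathcal{G}^s_{r,\tau}}\le C\bigl(\norm{u^\nu}_{\mathcal{G}^s_{r,\tau}}+\norm{u}_{\mathcal{G}^s_{r,\tau}}\bigr)\norm{w}_{\mathcal{G}^s_{r,\tau}}$: the product must be measured at level $r$, hence so must $w$. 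The quantity that is $O(\sqrt\nu)$ is $\norm{w}_{\mathcal{G}^s_{r-1,\tau}}$; at level $r$ you only know $\norm{w}_{\mathcal{G}^s_{r,\tau}}\le 2G_T$, so ``linearity in $w$'' transfers boundedness, not the rate. Interpolation does not rescue this ($\norm{w}_{\mathcal{G}^s_{r,\tau}}\le\norm{w}_{\mathcal{G}^s_{r-1,\tau}}^{1/2}\norm{w}_{\mathcal{G}^s_{r+1,\tau}}^{1/2}$ halves the rate and needs a uniform-in-time bound at level $r+1$ that neither $u^\nu$ nor $u$ possesses). The repair is to use incompressibility \emph{before} estimating: since $\dd_iu^\nu_i=\dd_jw_j=0$, one has $\dd_i\dd_j(u^\nu_iw_j+w_iu_j)=\dd_ju^\nu_i\,\dd_iw_j+\dd_jw_i\,\dd_iu_j$, a sum of products of first derivatives, whence $\norm{q}_{\mathcal{G}^s_{r,\tau}}\le C\norm{\dd_ju^\nu_i\,\dd_iw_j+\dd_jw_i\,\dd_iu_j}_{\mathcal{G}^s_{r-2,\tau}}\le C\bigl(\norm{u^\nu}_{\mathcal{G}^s_{r-1,\tau}}+\norm{u}_{\mathcal{G}^s_{r-1,\tau}}\bigr)\norm{w}_{\mathcal{G}^s_{r-1,\tau}}\le C\sqrt\nu$, the algebra property at level $r-2>3/2$ being available because $r>9/2$. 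This is precisely the device the paper uses on the Fourier side: in the bounds for $\mathcal{P}_1,\mathcal{P}_2$ it replaces $k\cdot\hat u^\nu_{k-j}$ by $j\cdot\hat u^\nu_{k-j}$ (legitimate since $(k-j)\cdot\hat u^\nu_{k-j}=0$), moving the extra derivative onto the low mode; without this step the bound $\norm{\tilde p}_{\mathcal{G}^s_{r,\tau}}\le CG_T\norm{w}_{\mathcal{G}^s_{r-1,\tau}}$ of \eqref{3.41} is out of reach.

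Elsewhere your route is correct but genuinely different from the paper's, in two instructive ways. For existence you use the Foias--Temam/Levermore--Oliver device $\dot\tau=-C\norm{u^\nu}_{\mathcal{G}^s_{r,\tau}}$, making the Gevrey norm non-increasing, with lifespan $T\sim\tau_0/\norm{a}_{\mathcal{G}^s_{r,\tau_0}}$; note that a $\tau$ coupled to $u^\nu$ is $\nu$-dependent, so to get the single radius required by the statement you should instead fix $\tau(t)=\tau_0-C\norm{a}_{\mathcal{G}^s_{r,\tau_0}}t$ and run a continuity argument. The paper decouples the Sobolev and Gevrey levels: it bounds $\norm{u^\nu}_{H^r}\le C_T$ separately, lets the Gevrey norm grow (Gr\"onwall, bound $G_T$), and chooses $\tau$ by the ODE \eqref{3.10}; this buys a lifespan equal to any $T<T_\ast$, the full Sobolev lifespan, which is the stronger assertion of Remark \ref{remark} --- though your shorter $T$ still proves the theorem as literally stated. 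For the velocity convergence you absorb $\nu\Delta u$ via Young's inequality into the dissipation $\nu\norm{w}^2_{\mathcal{G}^s_{r,\tau}}$ of $w$ itself, needing only $L^\infty_t$ Gevrey bounds; the paper keeps $\nu\Delta u^\nu$ as a forcing and applies Cauchy--Schwarz in time against the integrated dissipation bound $\nu\int_0^T\norm{u^\nu}_{\mathcal{G}^s_{r+1,\tau}}^2\,dt\le M_T$ of \eqref{3.13}. Both give the rate $\sqrt\nu$; yours is arguably cleaner since it never needs $M_T$, but in either scheme the constant in the $\tau$-ODE must be taken large enough to serve the $w$-equation's commutator as well as the $u^\nu$-equation's, a point the paper handles by ``choosing the appropriate constant $C$'' and which you should make explicit.
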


\begin{remark}\label{remark}
The uniform lifespan is  $0<T<T_\ast$ where $T_\ast$ is the maximal lifespan of $\mathcal{H}^r$ solutions. The uniform (with respect to $\nu$)  Gevrey  radius $\tau(t)$ of the solution is
\begin{equation}\label{1.6}
\tau(t)=\frac{1}{e^{C_1 t}\frac{1}{\tau_0}+\frac{C_2}{C_1}(e^{C_1 t}-1)}
\end{equation}
where $C_1,C_2$ are constants depending on $r, s, a,T$.
\end{remark}
\begin{remark} 
Comparaison with the known works about Gevery regularity of  Navier-Stokes equations and Euler equations \cite{BB1, BS, FT, IK1, KV, KV1}, the additional difficulties of this work is the uniform estimate of Gevery norm with respect to viscosity coefficients, and the estimate \eqref{1.5} with limit rates $\sqrt{\nu}$. 
\end{remark}

The paper is organized as follows. In section \ref{section2}, we will give the known results and preliminary lemmas. Section \ref{section3} consists of a priori estimate and the existence of the solutions in Gevrey space. The convergence in Gevrey space will be given in section \ref{section4}.

\section{Premilinary lemmas}\label{section2}

We first recall the following classical result of Kato in \cite{TK}.
\begin{theorem}\label{th 2.1}
Let $a\in \mathcal{H}^m (\mathbb{T}^3)$ for $m\geq 3$, then the following holds.\\
(1).There exists $T>0$ depending on $\norm{a}_{H^m}$ but not on $\nu$, such that \eqref{1.1} has a unique solution
$$
 u^\nu \in C([0,T],\mathcal{H}^m(\mathbb{T}^3))\, .
$$
Furthermore, $\{u^\nu\}$ is bounded in $C([0,T],\mathcal{H}^m(\mathbb{T}^3))$ for all $\nu>0$.\\
(2).For each $t\in[0,T]$, $u(t)=\lim_{\nu\to 0}u^\nu(t)$ exists strongly in $\mathcal{H}^{m-1}(\mathbb{T}^3)$ and weakly in $\mathcal{H}^m(\mathbb{T}^3)$, uniformly in $t$. $u$ is the unique solution to \eqref{1.2} satisfying
$$
 u\in C([0,T],\mathcal{H}^m(\mathbb{T}^3)).
$$
\end{theorem}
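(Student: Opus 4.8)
The plan is to derive a priori estimates in $\mathcal{H}^m$ that are uniform with respect to $\nu$, and then to use these both to produce the solutions with a common lifespan and to pass to the inviscid limit. The essential mechanism is that the dissipative term $-\nu\Delta u^\nu$ carries a favorable sign in any $\mathcal{H}^m$ energy identity, so that the resulting differential inequality controlling $\norm{u^\nu}_{H^m}$ does not see the viscosity; this is precisely what yields a lifespan $T$ depending only on $\norm{a}_{H^m}$ and not on $\nu$.

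For part (1), I would first eliminate the pressure. Applying the Leray projection $\mathbb{P}$ onto divergence-free fields sends \eqref{1.1} to $\partial_t u^\nu - \nu\Delta u^\nu + \mathbb{P}[(u^\nu\cdot\nabla)u^\nu] = 0$, since $\mathbb{P}$ annihilates $\nabla p^\nu$; equivalently one notes $\inner{\nabla p^\nu,\, \Lambda^{2m}u^\nu}=0$ because $u^\nu$ is divergence free. Testing the equation against $\Lambda^{2m}u^\nu$ gives
$$
\frac12\frac{d}{dt}\norm{u^\nu}_{H^m}^2 + \nu\norm{\nabla u^\nu}_{H^m}^2 = -\inner{(u^\nu\cdot\nabla)u^\nu,\, \Lambda^{2m}u^\nu}.
$$
The viscous term on the left is nonnegative and may be discarded. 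For the nonlinear term I would use the commutator (Kato--Ponce) estimate together with $\nabla\cdot u^\nu=0$ to absorb the top-order derivative, reaching a bound of the form $C\norm{\nabla u^\nu}_{L^\infty}\norm{u^\nu}_{H^m}^2 \le C\norm{u^\nu}_{H^m}^3$ via the Sobolev embedding $\mathcal{H}^m\hookrightarrow W^{1,\infty}$ (valid since $m\ge 3>5/2$). This produces the Riccati-type inequality $\frac{d}{dt}\norm{u^\nu}_{H^m}^2\le C\norm{u^\nu}_{H^m}^3$, whose solution stays bounded on an interval $[0,T]$ with $T$ and the bound depending only on $\norm{a}_{H^m}$, uniformly in $\nu$. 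Existence on $[0,T]$ then follows by a standard approximation scheme (Galerkin truncation, or Friedrichs mollification of the nonlinearity) for which these estimates furnish $\nu$-uniform bounds and compactness; uniqueness follows from an $L^2$ energy estimate on the difference of two solutions and Gronwall. The same argument with $\nu=0$ produces the Euler solution $u\in C([0,T],\mathcal{H}^m)$ on the same interval.

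For part (2), set $w=u^\nu-u$, which satisfies
$$
\partial_t w-\nu\Delta u^\nu+(w\cdot\nabla)u^\nu+(u\cdot\nabla)w+\nabla(p^\nu-p)=0,\qquad w|_{t=0}=0.
$$
I would estimate in $L^2$ rather than $H^m$, which is what avoids any loss of derivatives from the forcing $\nu\Delta u^\nu$. Taking the $L^2$ inner product with $w$ and using $\nabla\cdot w=\nabla\cdot u=0$ to kill both the pressure term and $\inner{(u\cdot\nabla)w,\,w}$, one obtains
$$
\frac12\frac{d}{dt}\norm{w}_{L^2}^2=\nu\inner{\Delta u^\nu,\, w}-\inner{(w\cdot\nabla)u^\nu,\, w}.
$$
Since $\{u^\nu\}$ is bounded in $\mathcal{H}^m\subset \mathcal{H}^2$, we have $\abs{\nu\inner{\Delta u^\nu,\,w}}\le C\nu\norm{w}_{L^2}$, while $\abs{\inner{(w\cdot\nabla)u^\nu,\,w}}\le\norm{\nabla u^\nu}_{L^\infty}\norm{w}_{L^2}^2\le C\norm{w}_{L^2}^2$. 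Gronwall's lemma with $w(0)=0$ then yields $\norm{w}_{L^2}\le C\nu$, uniformly in $t\in[0,T]$, hence strong $L^2$ convergence. Interpolating against the uniform $\mathcal{H}^m$ bound, $\norm{w}_{H^{m-1}}\le \norm{w}_{L^2}^{1/m}\norm{w}_{H^m}^{(m-1)/m}\to 0$, gives strong convergence in $\mathcal{H}^{m-1}$, uniformly in $t$. Finally, boundedness in $\mathcal{H}^m$ yields weak-$\ast$ limit points, the $\mathcal{H}^{m-1}$ limit identifies them all with $u$, and passing to the limit in the weak formulation confirms that $u$ solves \eqref{1.2}; uniqueness from part (1) shows the whole family converges.

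The main obstacle, and the point where care is genuinely needed, is the $\nu$-independence of the lifespan: everything hinges on organizing the $\mathcal{H}^m$ estimate so that the viscous contribution only improves the inequality rather than appearing as a perturbation whose size must be tracked. The convergence argument is comparatively soft, the only delicate point being the decision to run the difference estimate in $L^2$ (so that $\nu\Delta u^\nu$ costs just two derivatives, available from the $\mathcal{H}^m\subset\mathcal{H}^2$ bound) and to recover $\mathcal{H}^{m-1}$ convergence by interpolation rather than by a direct high-norm estimate.
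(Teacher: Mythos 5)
The paper does not actually prove Theorem \ref{th 2.1}: it is recalled verbatim as Kato's classical result and attributed to \cite{TK}, so there is no in-paper proof to compare against. Your reconstruction is precisely the standard argument behind that citation, and it is correct: the $\nu$-uniform $\mathcal{H}^m$ energy estimate in which the viscous term is discarded by sign, with the nonlinearity controlled through the Kato--Ponce commutator bound and the divergence-free cancellation $\inner{u^\nu\cdot\nabla\Lambda^m u^\nu,\Lambda^m u^\nu}=0$, yields the Riccati inequality and a lifespan $T<1/(C_m\norm{a}_{H^m})$ independent of $\nu$ --- exactly what the paper's Remark following the theorem asserts, and consistent with the estimates \eqref{3.7}--\eqref{3.9} the paper uses later; your part (2), an $L^2$ estimate on $w=u^\nu-u$ giving the rate $O(\nu)$ followed by interpolation $\norm{w}_{H^{m-1}}\leq\norm{w}_{L^2}^{1/m}\norm{w}_{H^m}^{(m-1)/m}$, is likewise the classical route and even anticipates the paper's own difference equation \eqref{3.28} in the Gevrey setting. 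The one point your sketch glosses over is strong continuity in time, i.e.\ $C([0,T],\mathcal{H}^m)$ rather than $L^\infty([0,T],\mathcal{H}^m)$: for the Euler endpoint this needs a small additional argument (weak continuity plus continuity of the norm from the energy identity, or a Bona--Smith approximation), which is standard but should be acknowledged in a complete write-up.
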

\begin{remark}
The time $T$ in Theorem 2.1 is actually depending on $m$ and $\norm{a}_{H^m}$, specifically
$$
 T<\frac{1}{C_m \norm{a}_{H^m}},
$$
where $C_m$ is a constant depending on $m$. In fact, the constant $C_m$ was created by using the Leibniz formula and Sobolev embedding inequality when estimating the nonlinear term. So, if the initial data $a\in \mathcal{G}^s_{r,\tau_0}(\mathbb{T}^3)$, then we have $a\in H^m_\sigma$, $\forall m$, because there exists a constant $C_{m,\tau_0,s}$ such that $\norm{a}_{H^m}\leq C_{m,\tau_0,s}\norm{a}_{\mathcal{G}^s_{r,\tau_0}}$. But we can't directly obtain an uniform bound for $C_m \norm{a}_{H^m}$ by the Gevrey norm of $\norm{a}_{\mathcal{G}^s_{r,\tau_0}}$ when $m$ is very large. Then we can't say that, if $m$ goes to infinity, $\frac{1}{C_m \norm{a}_{H^m}}$ has a positive lower bound. In this paper, we will pay many attention to the uniform lifespan $T>0$ that depends on $r,\norm{a}_{H^r}$.
\end{remark}
\begin{remark}
Compared with the known results Theorem \ref{th 2.1}, the additional difficulty arises on the estimate of the convecting term in Gevrey class setting. We need to use the decaying property of the radius of Gevrey class regularity to cancel the growth of the convecting term.
\end{remark}
We will use the following inequality,  for any $j,k\in\mathbb{Z}^3\backslash\{0\}$, we have
\begin{equation*}
\abs{k-j}\leq {2}\abs{j}\abs{k}.
\end{equation*}
The proof is a simple result of triangle inequality which we omit the details here. And we will give two Lemmas which will be used in the proof of Theorem \ref{Theorem1}.
\begin{lemma}\label{Lemma2}
Given two real numbers $\xi,\eta\geq1$ and $s\geq1$, then the following inequality holds
\begin{equation}\label{2.1}
\abs{\xi^{\frac{1}{s}}-\eta^{\frac{1}{s}}}\leq C\frac{\abs{\xi-\eta}}{\abs{\xi}^{1-\frac{1}{s}}+\abs{\eta}^{1-\frac{1}{s}}}
\end{equation}
where $C$ is a positive constant depending only on $s$.
\end{lemma}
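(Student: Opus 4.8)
The plan is to exploit the scaling (homogeneity) of the inequality together with one elementary pointwise estimate, rather than the mean value theorem, which turns out to be too crude in the relevant regime. Writing $a=1/s\in(0,1]$ and using the symmetry of \eqref{2.1} in $\xi,\eta$, I would assume without loss of generality that $\xi\ge\eta\ge 1$; then $\xi^{a}-\eta^{a}\ge 0$ and $\xi-\eta\ge 0$, so it suffices to prove the equivalent product-form inequality
\begin{equation*}
\pare{\xi^{a}-\eta^{a}}\pare{\xi^{1-a}+\eta^{1-a}}\le C\pare{\xi-\eta}.
\end{equation*}

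Next I would remove one degree of freedom by homogeneity: both sides are positively homogeneous of degree one under the scaling $(\xi,\eta)\mapsto(\lambda\xi,\lambda\eta)$, since $\xi^{a}\cdot\xi^{1-a}$ collects the exponents to $\lambda^{a}\lambda^{1-a}=\lambda$. Hence I may set $t=\xi/\eta\ge 1$ and reduce the claim to the single-variable statement $(t^{a}-1)(t^{1-a}+1)\le C(t-1)$ for all $t\ge1$. Expanding the left-hand side produces the clean identity
\begin{equation*}
\pare{t^{a}-1}\pare{t^{1-a}+1}=(t-1)+\pare{t^{a}-t^{1-a}}.
\end{equation*}

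It then remains only to control the extra term $t^{a}-t^{1-a}$. Here the key elementary observation is that, for $t\ge 1$ and $0<a\le1$, one has both $t^{a}\le t$ and $t^{1-a}\ge 1$, whence $t^{a}-t^{1-a}\le t-1$. Combining this with the identity above yields $(t^{a}-1)(t^{1-a}+1)\le 2(t-1)$, so that \eqref{2.1} holds with $C=2$ — in fact independent of $s$, which is stronger than claimed.

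The step I expect to be the real obstacle, and the reason I would route the argument through the homogeneity reduction, is precisely the regime $\xi\gg\eta$. The naive approach via the mean value theorem, $\xi^{a}-\eta^{a}=a\,c^{a-1}(\xi-\eta)$ with $c\in(\eta,\xi)$ and $a-1\le0$, forces the bound $c^{a-1}\le\eta^{a-1}$, which introduces a factor of the form $(\xi/\eta)^{1-a}$ when comparing against the denominator $\xi^{1-a}+\eta^{1-a}$; this factor is unbounded as $\xi/\eta\to\infty$, so the direct estimate fails to produce any finite constant. Passing to the single variable $t$ and invoking the concavity-type bound $t^{a}\le t$ is exactly what cancels this unbounded factor and delivers a constant depending only on $s$ (here, even a universal one).
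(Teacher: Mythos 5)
Your proof is correct, and it starts from the same reduction as the paper but finishes it in a more elementary, self-contained way. The paper also passes (for $\xi>\eta$) to the product form $(\xi^{1/s}-\eta^{1/s})(\xi^{1-1/s}+\eta^{1-1/s})\le C(\xi-\eta)$, divides through to reduce matters to bounding the one-variable ratio $\abs{\bigl((\eta/\xi)^{1-1/s}-(\eta/\xi)^{1/s}\bigr)/\bigl(1-\eta/\xi\bigr)}$, and then quotes Theorem 42 of Hardy--Littlewood--P\'olya to bound that ratio by $\max\pare{1-\frac{2}{s},\frac{2}{s}-1}$. Your normalization $t=\xi/\eta$ is the same one-variable reduction in disguise (under $x=\eta/\xi=1/t$ your extra term $(t^{a}-t^{1-a})/(t-1)$ is exactly the paper's ratio), but instead of citing an external inequality you kill it with the identity $(t^{a}-1)(t^{1-a}+1)=(t-1)+(t^{a}-t^{1-a})$ together with the two trivial bounds $t^{a}\le t$ and $t^{1-a}\ge 1$. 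What your route buys: a fully self-contained argument, an explicit universal constant $C=2$ independent of $s$ (stronger than the statement, which only asserts $C=C(s)$), and validity for all positive $\xi,\eta$ rather than only $\xi,\eta\ge 1$, since after the homogeneity reduction that hypothesis is never used. What the paper's citation buys is a slightly sharper constant, $1+\abs{1-\frac{2}{s}}$, which is still at most $2$. Your closing diagnosis of the mean value theorem is also accurate: a direct MVT bound on $\xi^{1/s}-\eta^{1/s}$ loses an unbounded factor of order $(\xi/\eta)^{1-1/s}$, which is precisely why the paper routes through the product form as well (it reserves the MVT for the $\abs{\abs{k}^{r}-\abs{k-j}^{r}}$ estimates in Lemma \ref{Lemma3}).
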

\begin{proof}
The case for $s=1$ is trivial. Let us consider the case for $s>1$.
Without loss of generality, we may assume $\xi>\eta$. Then \eqref{2.1} is equivalent with
\begin{equation*}
\frac{(\xi^{1\over s}-\eta^{1\over s})(\xi^{1-{1\over s}}+\eta^{1-{1\over s}})}{\xi-\eta} \leq C.
\end{equation*}
Then it suffices to show that
\begin{equation*}
\abs{\frac{(\frac{\eta}{\xi})^{1-{1\over s}} -(\frac{\eta}{\xi})^{1\over s}}{1-\frac{\eta}{\xi}}}\leq C.
\end{equation*}
By Theorem 42 in \cite{Ha}, it can be obtained for fixed $s>1$
\begin{equation*}
\abs{\frac{(\frac{\eta}{\xi})^{1-{1\over s}} -(\frac{\eta}{\xi})^{1\over s}}{1-\frac{\eta}{\xi}}}\leq \max\left(1-\frac{2}{s},\frac{2}{s}-1\right)\leq C
\end{equation*}
Then the lemma \ref{Lemma2} is proved.
\end{proof}

With the use of Lemma \ref{Lemma2}, we have the following estimate about the nonlinear term.
\begin{lemma}\label{Lemma3}
Let $r>{9\over2}, s\geq 1 $ and $\tau>0$ is a constant. Then for any $v\in \mathcal{G}^s_{r+1,\tau}(\mathbb{T}^3)$, the following estimate holds,
\begin{equation}\label{2.2}
\begin{split}
&\abs{\inner{\Lambda^r e^{\tau\Lambda^{1/s}}(v\cdot\nabla v),\, \Lambda^{r}e^{\tau\Lambda^{1/s}}v}} \leq C\norm{ v}_{H^r}\norm{v}_{\mathcal{G}^{s}_{r,\tau}}^2+C\norm{v}_{H^r}^2\norm{v}_{\mathcal{G}^{s}_{r,\tau}}\\
 &\qquad\qquad\quad+\left[C\tau \norm{u}_{H^r}+C\tau^2 (\norm{u}_{H^r}+\norm{u}_{\mathcal{G}^s_{r,\tau}})\right]\norm{u}_{\mathcal{G}^{s}_{r+\frac{1}{2s},\tau}}^2\, ,
\end{split}
\end{equation}
where $C$ is a constant depending only on $r$ and $s$.
\end{lemma}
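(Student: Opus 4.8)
The plan is to reduce the trilinear expression to a commutator between the Fourier multiplier $A:=\Lambda^r e^{\tau\Lambda^{1/s}}$ and the transport operator $v\cdot\nabla$, and then to estimate the symbol of that commutator frequency by frequency. Writing everything through Parseval's identity, with $m(\xi)=\abs{\xi}^re^{\tau\abs{\xi}^{1/s}}$ the symbol of $A$, one has
$$\inner{A(v\cdot\nabla v),\,Av}=i\sum_{k,j\in\mathbb{Z}^3}m(k)^2\,(\hat v_{k-j}\cdot j)(\hat v_j\cdot\overline{\hat v_k}).$$
The key observation is that the diagonal piece $i\sum_{k,j}m(k)m(j)(\hat v_{k-j}\cdot j)(\hat v_j\cdot\overline{\hat v_k})$ equals $\inner{v\cdot\nabla(Av),\,Av}$, which vanishes because $v$ is divergence free and $Av$ is real. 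Subtracting it leaves
$$\inner{A(v\cdot\nabla v),\,Av}=i\sum_{k,j}m(k)\bigl(m(k)-m(j)\bigr)(\hat v_{k-j}\cdot j)(\hat v_j\cdot\overline{\hat v_k}),$$
so the whole estimate is reduced to controlling the symbol difference $m(k)-m(j)$.

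Next I would split this difference into a polynomial (Sobolev) part and an exponential (Gevrey) part, writing $m(k)-m(j)=(\abs{k}^r-\abs{j}^r)e^{\tau\abs{k}^{1/s}}+\abs{j}^r\bigl(e^{\tau\abs{k}^{1/s}}-e^{\tau\abs{j}^{1/s}}\bigr)$. For the polynomial part I would use the elementary bound $\abs{\abs{k}^r-\abs{j}^r}\le C\abs{k-j}(\abs{k}^{r-1}+\abs{j}^{r-1})$ together with the subadditivity $\abs{k}^{1/s}\le\abs{k-j}^{1/s}+\abs{j}^{1/s}$ (valid since $s\ge1$) to redistribute the exponential weight among the three frequencies. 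After a paraproduct-type splitting according to which of $\abs{j},\abs{k-j},\abs{k}$ is largest, summation by the Cauchy--Schwarz and Young convolution inequalities then produces exactly the first two terms $C\norm{v}_{H^r}\norm{v}_{\mathcal{G}^s_{r,\tau}}^2+C\norm{v}_{H^r}^2\norm{v}_{\mathcal{G}^s_{r,\tau}}$, the $H^r$-factors arising from the lowest-frequency factor whose $\ell^1$ summability is guaranteed by Sobolev embedding on $\mathbb{T}^3$, which is where the hypothesis $r>\frac92$ enters.

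The genuinely new part, and the main obstacle, is the exponential difference $e^{\tau\abs{k}^{1/s}}-e^{\tau\abs{j}^{1/s}}$. Here I would first factor out $e^{\tau\abs{j}^{1/s}}$ and Taylor expand $e^{\tau(\abs{k}^{1/s}-\abs{j}^{1/s})}-1$ with second-order remainder, so that the first-order term carries a single power of $\tau$ and the remainder carries $\tau^2$; this is the origin of the coefficients $C\tau\norm{v}_{H^r}$ and $C\tau^2(\norm{v}_{H^r}+\norm{v}_{\mathcal{G}^s_{r,\tau}})$. The crucial gain is that Lemma \ref{Lemma2} lets me bound $\abs{\abs{k}^{1/s}-\abs{j}^{1/s}}\le C\abs{k-j}/(\abs{k}^{1-1/s}+\abs{j}^{1-1/s})$, and applying the arithmetic--geometric mean inequality to the denominator splits it symmetrically, contributing factors $\abs{k}^{1/(2s)-1/2}\abs{j}^{1/(2s)-1/2}$. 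Combined with the preliminary inequality $\abs{k-j}\le2\abs{k}\abs{j}$ for rebalancing the remaining derivative, this is exactly what upgrades two of the three factors from $\mathcal{G}^s_{r,\tau}$ to $\mathcal{G}^s_{r+\frac{1}{2s},\tau}$, yielding the last term of \eqref{2.2}. The delicate point throughout is bookkeeping the frequency regimes so that the factor carrying no exponential weight is always the lowest frequency (controlled in $H^r$) while the gain $\abs{k-j}/(\abs{k}^{1-1/s}+\abs{j}^{1-1/s})$ is distributed so that each surviving Gevrey norm sees precisely the extra $\frac{1}{2s}$ derivatives and no more; checking that the quadratic remainder indeed lands in the $\tau^2$-term with the $\mathcal{G}^s_{r,\tau}$ factor (which forces the subadditive splitting of the doubled exponential weight $e^{2\tau\abs{k}^{1/s}}$) is the step requiring the most care.
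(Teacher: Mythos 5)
Your overall architecture coincides with the paper's: the commutator reduction via the cancellation $\inner{v\cdot\nabla (Av),Av}=0$, the splitting of the symbol difference into a polynomial and an exponential part, and the use of Lemma \ref{Lemma2} together with a Taylor expansion of the exponential to generate the $\tau$- and $\tau^2$-coefficients. Your treatment of the exponential difference is sound and only mildly different in execution from the paper's $\mathcal{T}_2$: where you split the denominator symmetrically by the arithmetic--geometric mean inequality, obtaining the factors $\abs{k}^{\frac{1}{2s}-\frac12}\abs{j}^{\frac{1}{2s}-\frac12}$, the paper instead writes $\abs{k}^{r-\frac{1}{2s}}\leq C(\abs{j}^{r-\frac{1}{2s}}+\abs{k-j}^{r-\frac{1}{2s}})$ and bounds the denominator below by a single frequency; both routes deliver $C\tau\norm{v}_{H^r}\norm{v}_{\mathcal{G}^s_{r+\frac{1}{2s},\tau}}^2+C\tau^2\norm{v}_{\mathcal{G}^s_{r,\tau}}\norm{v}_{\mathcal{G}^s_{r+\frac{1}{2s},\tau}}^2$, provided (as you note) the first-order Taylor term is kept free of any exponential factor.

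The genuine gap is in your polynomial part, and it is caused by your choice of splitting. You attach the exponential at the \emph{output} frequency, writing $m(k)-m(j)=(\abs{k}^r-\abs{j}^r)e^{\tau\abs{k}^{1/s}}+\abs{j}^r\bigl(e^{\tau\abs{k}^{1/s}}-e^{\tau\abs{j}^{1/s}}\bigr)$. The polynomial piece then carries the total weight $e^{2\tau\abs{k}^{1/s}}$, of which the factor $\hat v_k$ can absorb only $e^{\tau\abs{k}^{1/s}}$ into its $\mathcal{G}^s_{r,\tau}$ norm; the leftover $e^{\tau\abs{k}^{1/s}}$ must be redistributed by subadditivity as $e^{\tau\abs{k}^{1/s}}\leq e^{\tau\abs{j}^{1/s}}e^{\tau\abs{k-j}^{1/s}}$, so that \emph{both} remaining factors acquire full Gevrey exponentials; for $s=1$ and nearly collinear frequencies with $\abs{j}\sim\abs{k-j}\sim\abs{k}/2$ the three available weights add up to exactly $2\tau\abs{k}$, with no slack. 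Consequently the lowest-frequency factor you sum in $\ell^1$ is controlled only by a Gevrey norm, never by $\norm{v}_{H^r}$, and your method produces $C\norm{v}_{\mathcal{G}^s_{r,\tau}}^3$ for this part rather than the claimed $C\norm{v}_{H^r}\norm{v}_{\mathcal{G}^s_{r,\tau}}^2+C\norm{v}_{H^r}^2\norm{v}_{\mathcal{G}^s_{r,\tau}}$. This is strictly weaker (since $\norm{v}_{H^r}\leq C\norm{v}_{\mathcal{G}^s_{r,\tau}}$ but not conversely) and is fatal for the intended application: in Proposition \ref{proposition1} the $\tau$-free terms must carry $H^r$ coefficients, uniformly bounded by Kato's theory, for the Gronwall argument, and every occurrence of $\norm{u}_{\mathcal{G}^s_{r+\frac{1}{2s},\tau}}^2$ must carry a factor $\tau$ or $\tau^2$ so that it can be absorbed by the shrinking radius through $\tau^\prime(t)<0$; a bare cubic Gevrey term fits neither slot. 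Put differently, your polynomial part differs from the paper's, which attaches the exponential at the \emph{differentiated} frequency, i.e.\ $(\abs{k}^r-\abs{j}^r)e^{\tau\abs{j}^{1/s}}$, by the cross term $(\abs{k}^r-\abs{j}^r)\bigl(e^{\tau\abs{k}^{1/s}}-e^{\tau\abs{j}^{1/s}}\bigr)$, which is genuinely of order $\tau$ and requires the Lemma \ref{Lemma2}/Taylor machinery; it cannot be handled by the plain paraproduct-plus-Sobolev argument you describe. The repair is simply to adopt the paper's splitting, so that the velocity factor of the convolution never carries an exponential weight and its $\ell^1$ sum is bounded by $\norm{v}_{H^r}$ using $r>\frac92$.
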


\begin{proof}
By the definition of the vector function space $\mathcal{G}^s_{r+1,\tau}(\mathbb{T}^3)$, we have $v=\sum_{j\in\mathbb{Z}^3}\hat v_j e^{i j\cdot x}$ and $\hat v_0=0$.
Using Fourier series convolution property, one have
\begin{equation*}
\begin{aligned}
 v\cdot\nabla v &=i\sum_{k\in\mathbb{Z}^3}\sum_{j\in\mathbb{Z}^3} [\hat v_j \cdot (k-j)]\hat v_{k-j} e^{i k\cdot x}.
\end{aligned}
\end{equation*}
Applying the operator $\Lambda^r e^{\tau \Lambda^{1/s}} $ on $v\cdot\nabla v$, one have
$$
 \Lambda^r e^{\tau \Lambda^{1/s}}(v\cdot\nabla v)=i\sum_{k\in\mathbb{Z}^3}\sum_{j\in\mathbb{Z}^3}[\hat v_j\cdot(k-j)]\hat v_{k-j} \abs{k}^r e^{\tau \abs{k}^{1/s}} e^{i k\cdot x}.
$$
And $\Lambda^{r}e^{\tau \Lambda^{1/s}}v=\sum_{\ell\in\mathbb{Z}^3}\abs{\ell}^{r}e^{\tau \abs{\ell}^{1/s}}\hat v_\ell e^{i \ell\cdot x}$. Now we take the $L^2$ inner product of $\Lambda^r e^{\tau\Lambda^{1/s}}(v\cdot\nabla v)$ with $\Lambda^{r}e^{\tau \Lambda^{1/s}}v$ over $\mathbb{T}^3$. The orthogonality of the exponentials in $L^2$ implies
\begin{equation*}
\begin{split}
&\inner{\Lambda^{r}e^{\tau \Lambda^{1/s}}(v\cdot\nabla v), \Lambda^{r}e^{\tau \Lambda^{1/s}}v}\\
&= i(2\pi)^3\sum_{k\in\mathbb{Z}^3}\sum_{j\in\mathbb{Z}^3}\left[\hat{v}_j\cdot (k-j)\right](\hat{v}_{k-j}\cdot{\hat{v}_{-k}})\abs{k}^{2r}e^{2\tau\abs{k}^{1/s}}.
\end{split}
\end{equation*}
The cancellation property of the convecting term implies
\begin{equation*}
\begin{split}
 0&=\inner{v\cdot\nabla\Lambda^r e^{\tau \Lambda^{1/s}}v,\Lambda^r e^{\tau \Lambda^{1/s}}v}\\
  &=i(2\pi)^3 \sum_{k\in\mathbb{Z}^3}\sum_{j\in\mathbb{Z}^3}\left[\hat v_j\cdot (k-j)\right]\abs{k-j}^r e^{\tau \abs{k-j}^{1/s}}(\hat v_{k-j}\cdot\hat v_{-k})\abs{k}^r e^{\tau \abs{k}^{1/s}}.
\end{split}
\end{equation*}
Then we have
\begin{equation*}
\begin{split}
&\quad\inner{\Lambda^{r}e^{\tau \Lambda^{1/s}} (v\cdot\nabla v),\Lambda^{r}e^{\tau \Lambda^{1/s}} v}\\
&=\inner{\Lambda^r e^{\tau \Lambda^{1/s}}(v\cdot\nabla v)-v\cdot\nabla \Lambda^r e^{\tau \Lambda^{1/s}}v,\Lambda^r e^{\tau \Lambda^{1/s}}v}\\
&=\mathcal{T}_1+\mathcal{T}_2,
\end{split}
\end{equation*}
where
$$
\mathcal{T}_1=i(2\pi)^3 \sum_{k\in\mathbb{Z}^3}\sum_{j\in\mathbb{Z}^3}(\abs{k}^r-\abs{k-j}^r)e^{\tau \abs{k-j}^{1/s}}\left[\hat v_j\cdot (k-j)\right](\hat v_{k-j}\cdot\hat v_{-k}) \abs{k}^r e^{\tau \abs{k}^{1/s}},
$$
and
$$
\mathcal{T}_2=i(2\pi)^3 \sum_{k\in\mathbb{Z}^3}\sum_{j\in\mathbb{Z}^3}\abs{k}^{r}(e^{\tau \abs{k}^{1/s}}-e^{\tau \abs{k-j}^{1/s}})\left[\hat v_j\cdot (k-j)\right](\hat v_{k-j}\cdot\hat v_{-k}) \abs{k}^r e^{\tau \abs{k}^{1/s}}.
$$
Before we come to the estimate of $\mathcal{T}_1$ and $\mathcal{T}_2$, we recall the following mean value theorem, for $\forall \xi,\eta\in \mathbb{R}^+$, there exists a constant $0\leq\theta,\theta^\prime\leq1$ such that
\begin{equation*}
\begin{split}
\xi^r-\eta^r &=r(\xi-\eta)\left[(\theta\xi+(1-\theta)\eta)^{r-1}-\xi^{r-1}\right]+r(\xi-\eta)\eta^{r-1}\\
                       &= r(r-1)\theta(\xi-\eta)^2[\theta^\prime(\theta\xi+(1-\theta)\eta)+(1-\theta^\prime)\eta]^{r-2}\\
                       &\quad +r(\xi-\eta)\eta^{r-1}.\\
\end{split}
\end{equation*}
Then there exists a constant $C$ depending only on $r$ such that
\begin{align*}
 \abs{\abs{k}^r-\abs{k-j}^r} \leq C\abs{j}^2(\abs{j}^{r-2}+\abs{k-j}^{r-2})+C \abs{j}\abs{k-j}^{r-1}.
\end{align*}
From the inequality $e^\xi\leq e+\xi^2 e^\xi$ that holds for all $\xi\in \mathbb{R}$, we can bounded the exponential $e^{\tau\abs{k-j}^{1/s}}$ by $e+\tau^2\abs{k-j}^{2/s}e^{\tau\abs{k-j}^{1/s}}$.
Then $\mathcal{T}_1$ can be bounded by
\begin{equation*}
\begin{split}
 &\quad\abs{\mathcal{T}_1}\\
 &\leq C\sum_{k\in\mathbb{Z}^3}\sum_{j\in\mathbb{Z}^3}(\abs{j}^{r}+\abs{j}^2\abs{k-j}^{r-2})\abs{\hat v_j}\abs{k-j}\abs{\hat v_{k-j}}(e+\tau^2\abs{k-j}^{2/s}e^{\tau \abs{k-j}^{1/s}}) \\
          &\quad \times\abs{\hat v_{-k}}\abs{k}^r e^{\tau \abs{k}^{1/s}} +C\sum_{k\in\mathbb{Z}^3}\sum_{j\in\mathbb{Z}^3}\abs{j}\abs{\hat v_j}\abs{k-j}^r e^{\tau \abs{k-j}^{1/s}}\abs{\hat v_{k-j}}\abs{\hat v_{-k}}\abs{k}^r e^{\tau \abs{k}^{1/s}}\\
          &=\mathcal{T}_{11}+\mathcal{T}_{12}+\mathcal{T}_{13}+\mathcal{T}_{14}+\mathcal{T}_{15}.
\end{split}
\end{equation*}
With application of discrete H\"older inequality and Minkowski inequality, one can obtain the following estimates. For example, we give the details for $\mathcal{T}_{11}$, and the rest can be estimated in the same way,
\begin{align*}
 \mathcal{T}_{11} &=eC\sum_{k\in\mathbb{Z}^3}\sum_{j\in\mathbb{Z}^3}\abs{j}^r \abs{\hat v_j} \abs{k-j}\abs{\hat v_{k-j}} \abs{k}^r e^{\tau\abs{k}^{1/s}}\abs{\hat v_{-k}}\\
    &\leq C\norm{v}_{\mathcal{G}^s_{r,\tau}}\left[\sum_{k\in\mathbb{Z}^3}\left(\sum_{j\in\mathbb{Z}^3}\abs{j}^r \abs{\hat v_j}\abs{k-j}\abs{\hat v_{k-j}}\right)^2\right]^{1/2}\\
    &=C\norm{v}_{\mathcal{G}^s_{r,\tau}}\left[\sum_{k\in\mathbb{Z}^3}\left(\sum_{\ell\in\mathbb{Z}^3}\abs{k-\ell}^r \abs{\hat v_{k-\ell}}\abs{\ell}\abs{\hat v_{\ell}}\right)^2\right]^{1/2}\\
    &\leq C\norm{v}_{\mathcal{G}^s_{r,\tau}}\left(\sum_{k\in\mathbb{Z}^3}\abs{k-\ell}^{2r}\abs{\hat v_{k-\ell}}^2\right)^{1/2}\sum_{\ell\in\mathbb{Z}^3}\frac{\abs{\ell}}{(1+\abs{\ell}^2)^{r/2}}(1+\abs{\ell}^2)^{r/2}\abs{\hat v_\ell}\\
    &\leq C\norm{v}_{H^r}^2 \norm{v}_{\mathcal{G}^s_{r,\tau}} \left(\sum_{\ell\in\mathbb{Z}^3}\frac{\abs{\ell}^2}{(1+\abs{\ell}^2)^r}\right)^{1/2}\\
    &\leq C\norm{v}_{H^r}^2 \norm{v}_{\mathcal{G}^s_{r,\tau}},
\end{align*}
where $C$ is a constant depending on $r,e$ and for $r>9/2$, the summation in the above $\left(\sum_{\ell\in\mathbb{Z}^3}\frac{\abs{\ell}^2}{(1+\abs{\ell}^2)^r}\right)^{1/2}$ is bounded by some constant depending on $r$. Similarly with $\mathcal{T}_{11}$, we have
\begin{align*}
 \mathcal{T}_{12} &=eC\sum_{k\in\mathbb{Z}^3}\sum_{j\in\mathbb{Z}^3}\abs{j}^2 \abs{\hat v_j} \abs{k-j}^{r-1}\abs{\hat v_{k-j}} \abs{k}^r e^{\tau\abs{k}^{1/s}}\abs{\hat v_{-k}}\\
   &\leq C\norm{v}_{H^r}^2\norm{v}_{\mathcal{G}^s_{r,\tau}},
\end{align*}
and
\begin{align*}
 \mathcal{T}_{13} &=C\tau^2\sum_{k\in\mathbb{Z}^3}\sum_{j\in\mathbb{Z}^3}\abs{j}^r \abs{\hat v_j} \abs{k-j}^{1+2/s}\abs{\hat v_{k-j}} \abs{k}^r e^{\tau\abs{k}^{1/s}}\abs{\hat v_{-k}}\\
  &\leq C\tau^2 \norm{v}_{H^r}\norm{v}_{\mathcal{G}^s_{r,\tau}}^2.
\end{align*}
Note that $\hat v_0=0, s\geq 1$ in the summation, and $\abs{k-j}^{\frac{1}{2s}}\leq C\abs{k}^{\frac{1}{2s}}\abs{j}^{\frac{1}{2s}}$, we can similarly have
\begin{align*}
 \mathcal{T}_{14} &=C\tau^2\sum_{k\in\mathbb{Z}^3}\sum_{j\in\mathbb{Z}^3}\abs{j}^2 \abs{\hat v_j} \abs{k-j}^{r-1+2/s}e^{\tau\abs{k-j}^{1/s}}\abs{\hat v_{k-j}} \abs{k}^r e^{\tau\abs{k}^{1/s}}\abs{\hat v_{-k}}\\
  &\leq C\tau^2\sum_{k\in\mathbb{Z}^3}\sum_{j\in\mathbb{Z}^3}\abs{j}^{2+\frac{1}{2s}} \abs{\hat v_j} \abs{k-j}^{r+\frac{1}{2s}}e^{\tau\abs{k-j}^{1/s}}\abs{\hat v_{k-j}} \abs{k}^{r+\frac{1}{2s}} e^{\tau\abs{k}^{1/s}}\abs{\hat v_{-k}}\\
  &\leq C\tau^2 \norm{v}_{H^r}\norm{v}_{\mathcal{G}^s_{r+\frac{1}{2s},\tau}}^2,
\end{align*}
and
\begin{align*}
 \mathcal{T}_{15} &=C\sum_{k\in\mathbb{Z}^3}\sum_{j\in\mathbb{Z}^3}\abs{j} \abs{\hat v_j} \abs{k-j}^r e^{\tau\abs{k-j}^{1/s}}\abs{\hat v_{k-j}} \abs{k}^r e^{\tau\abs{k}^{1/s}}\abs{\hat v_{-k}}\\
 &\leq C\norm{v}_{H^r}\norm{v}_{\mathcal{G}^s_{r,\tau}}^2.
\end{align*}
Noting that $\norm{v}_{\mathcal{G}^s_{r,\tau}}\leq \norm{v}_{\mathcal{G}^s_{r+\frac{1}{2s},\tau}}$, then $\mathcal{T}_{13}\leq \mathcal{T}_{14}$. Thus we obtain
$$
 \abs{\mathcal{T}_1}\leq C\norm{v}_{H^r}^2\norm{v}_{\mathcal{G}^s_{r,\tau}}+C\norm{v}_{H^r} \norm{v}_{\mathcal{G}^s_{r,\tau}}^2+C\tau^2 \norm{v}_{H^r}\norm{v}_{\mathcal{G}^s_{r+\frac{1}{2s},\tau}}^2.
$$
As for $\mathcal{T}_2$, we have
\begin{equation*}
\begin{split}
 \mathcal{T}_2 &=i(2\pi)^3 \sum_{k\in\mathbb{Z}^3}\sum_{j\in\mathbb{Z}^3}\abs{k}^{r}(e^{\tau \abs{k}^{1/s}}-e^{\tau \abs{k-j}^{1/s}})[\hat v_j\cdot (k-j)](\hat v_{k-j}\cdot\hat v_{-k}) \abs{k}^r e^{\tau \abs{k}^{1/s}}\\
      &=i(2\pi)^3 \sum_{k\in\mathbb{Z}^3}\sum_{j\in\mathbb{Z}^3}\abs{k}^r e^{\tau \abs{k-j}^{1/s}}\left[e^{\tau (\abs{k}^{1/s}-\abs{k-j}^{1/s})}-1 \right][\hat v_j\cdot (k-j)]\\
      &\quad\times(\hat v_{k-j}\cdot\hat v_{-k}) \abs{k}^r e^{\tau \abs{k}^{1/s}}.
\end{split}
\end{equation*}
We note that the inequality $\abs{e^\xi-1}\leq \abs{\xi}e^{\abs{\xi}}$ holds for $\xi\in\mathbb{R}$. Then
$$
 \abs{e^{\tau (\abs{k}^{1/s}-\abs{k-j}^{1/s})}-1}\leq \tau \abs{\abs{k}^{1/s}-\abs{k-j}^{1/s}}e^{\tau \abs{\abs{k}^{1/s}-\abs{k-j}^{1/s} }}.
$$
Since $s\geq1$, we have
$$
 \abs{\abs{k}^{1/s}-\abs{k-j}^{1/s}}\leq \abs{j}^{1/s}.
$$
Then we actually have
$$
 \abs{e^{\tau (\abs{k}^{1/s}-\abs{k-j}^{1/s})}-1}\leq \tau \abs{\abs{k}^{1/s}-\abs{k-j}^{1/s}}e^{\tau \abs{j}^{1/s}}.
$$
By Lemma \ref{Lemma2}, we have
$$
\abs{\abs{k}^{1/s}-\abs{k-j}^{1/s}}\leq C\abs{j}\frac{1}{\abs{k}^{1-1/s}+\abs{k-j}^{1-1/s}}.
$$
Then $\mathcal{T}_2$ can be bounded by the inequality
\begin{equation*}
\begin{split}
\abs{\mathcal{T}_2} &\leq C\sum_{k\in\mathbb{Z}^3}\sum_{j\in\mathbb{Z}^3}\abs{k}^r e^{\tau \abs{k-j}^{1/s}}\abs{e^{\tau (\abs{k}^{1/s}-\abs{k-j}^{1/s})}-1}\abs{k-j}\abs{\hat v_j}\abs{\hat v_{k-j}}\\
    &\quad\times\abs{v_{-k}}\abs{k}^r e^{\tau \abs{k}^{1/s}} \\
    &\leq C\sum_{k\in\mathbb{Z}^3}\sum_{j\in\mathbb{Z}^3}\abs{k}^{r-\frac{1}{2s}} e^{\tau \abs{k-j}^{1/s}}\tau \abs{\abs{k}^{1/s}-\abs{k-j}^{1/s}}e^{\tau \abs{j}^{1/s}}\abs{k-j}\abs{\hat v_j}\abs{\hat v_{k-j}}\\
    &\quad\times\abs{\hat v_{-k}}\abs{k}^{r+\frac{1}{2s}} e^{\tau \abs{k}^{1/s}} \\
    &\leq C\tau\sum_{k\in\mathbb{Z}^3}\sum_{j\in\mathbb{Z}^3}(\abs{j}^{r-\frac{1}{2s}}+\abs{k-j}^{r-\frac{1}{2s}}) e^{\tau \abs{k-j}^{1/s}}\frac{\abs{j}\abs{k-j}}{\abs{k}^{1-1/s}+\abs{k-j}^{1-1/s}} \\
    &\quad\times e^{\tau \abs{j}^{1/s}}\abs{\hat u_j}\abs{\hat v_{k-j}}\abs{\hat v_{-k}}\abs{k}^{r+\frac{1}{2s}} e^{\tau \abs{k}^{1/s}}\\
    &\leq \mathcal{T}_{21}+\mathcal{T}_{22},
\end{split}
\end{equation*}
where
\begin{align*}
 \mathcal{T}_{21} &=C\tau\sum_{k\in\mathbb{Z}^3}\sum_{j\in\mathbb{Z}^3}\abs{j}^{r+\frac{1}{2s}}e^{\tau\abs{j}^{1/s}}\abs{\hat v_j}\abs{k-j}(1+\tau\abs{k-j}^{1/s}e^{\tau\abs{k-j}^{1/s}})\\
 &\quad\times\abs{\hat v_{k-j}} \abs{k}^{r+\frac{1}{2s}}e^{\tau\abs{k}^{1/s}}\abs{\hat v_{-k}},\\
 \mathcal{T}_{22}&=C\tau\sum_{k\in\mathbb{Z}^3}\sum_{j\in\mathbb{Z}^3}\abs{j}(1+\tau\abs{j}^{1/s}e^{\tau\abs{j}^{1/s}})\abs{\hat v_j}\abs{k-j}^{r+\frac{1}{2s}}e^{\tau\abs{k-j}^{1/s}}\\
 &\quad\times\abs{\hat v_{k-j}} \abs{k}^{r+\frac{1}{2s}}e^{\tau\abs{k}^{1/s}}\abs{\hat v_{-k}}.
\end{align*}
We have used the inequality $\abs{k}^{1-1/s}+\abs{k-j}^{1-1/s}\geq \abs{j}^{1-1/s}$ and $e^\xi\leq 1+\xi e^\xi$ for $\xi\in\mathbb{R}^+$ in the estimation of $\mathcal{T}_{21}$.
With application of H{\"o}lder inequality and Minkowiski inequality, we have for $\mathcal{T}_{21}$,
\begin{align*}
\abs{\mathcal{T}_{21}} \leq C\tau \norm{v}_{H^r}\norm{v}_{\mathcal{G}^s_{r+\frac{1}{2s},\tau}}^2+
C\tau^2\norm{v}_{\mathcal{G}^s_{r,\tau}}\norm{v}_{\mathcal{G}^s_{r+\frac{1}{2s},\tau}}^2.
\end{align*}
Symmetrically, one has a same bound for $\mathcal{T}_{22}$, then for $\mathcal{T}_2$,
\begin{align*}
\abs{\mathcal{T}_{2}} \leq C\tau \norm{v}_{H^r}\norm{v}_{\mathcal{G}^s_{r+\frac{1}{2s},\tau}}^2+
C\tau^2\norm{v}_{\mathcal{G}^s_{r,\tau}}\norm{v}_{\mathcal{G}^s_{r+\frac{1}{2s},\tau}}^2.
\end{align*}
Then we obtain
\begin{align*}
\abs{\inner{\Lambda^{r}e^{\tau\Lambda^{1/s}}(v\cdot\nabla v),\Lambda^{r}e^{\tau\Lambda^{1/s}}v}} &\leq \abs{\mathcal{T}_1}+ \abs{\mathcal{T}_2}\\
 &\leq C\norm{v}_{H^r}^2 \norm{v}_{\mathcal{G}^s_{r,\tau}}+C\norm{v}_{H^r}\norm{v}_{\mathcal{G}^s_{r,\tau}}^2\\
 &\quad+C\tau^2\norm{v}_{\mathcal{G}^s_{r,\tau}}\norm{v}_{\mathcal{G}^s_{r+\frac{1}{2s},\tau}}^2\\
 &\quad+C\tau(1+\tau)\norm{v}_{H^r}\norm{v}_{\mathcal{G}^s_{r+\frac{1}{2s},\tau}}^2\, ,
\end{align*}
which finishes the proof of  Lemma \ref{Lemma3}.
\end{proof}

\section{Uniform existence of solutions}\label{section3}
In this section, we will first show the existence of Gevrey class solutions $u^\nu$ to Navier Stokes equations \eqref{1.1}. And the existence of Gevrey class solution $u$ to Euler equations \eqref{1.2} can be obtained similarly. The method of the proof are based on Galerkin approximation. Before that, we first consider the following equivalent equation for Navier-Stokes equation,
\begin{equation}\label{PNS}
\begin{aligned}
 \frac{d}{dt}u^\nu +\nu A u^\nu +\mathbb{P}(u^\nu\cdot\nabla u^\nu) &=0,\\
  u^\nu|_{t=0} &=\mathbb{P}a.
\end{aligned}
\end{equation}
where $A=-\mathbb{P}\Delta$ is the well-known Stokes operator and $\mathbb{P}$ is the Leray projector which maps a vector function $v$ into its divergence free part $v_1$, such that $v=v_1+\nabla q$ and $\nabla\cdot v_1=0$, $q$ is a scalar function and $\inner{v_1,\nabla q}=0$. Similarly for Euler equation, we have the following equivalent form,
\begin{equation}\label{PE}
\begin{aligned}
 \frac{d}{dt}u^\nu +\mathbb{P}(u^\nu\cdot\nabla u^\nu) &=0,\\
  u^\nu|_{t=0} &=\mathbb{P}a.
\end{aligned}
\end{equation}
We then recall some properties of the Stokes operator $A$, which are known in [Chapter 4 in \cite{CF}].
\begin{proposition}\label{prop4}
The Stokes operator $A$ is symmetric and selfadjoint, moreover, the inverse of the Stokes operator, $A^{-1}$, is a compact operator in $\mathcal{L}^2$. The Hilbert theorem implies there exists a sequence of positive numbers $\lambda_j$ and an orthonormal basis ${w_j}$ of $\mathcal{L}^2$, which satisfies
\begin{align*}
 A w_j=\lambda_j w_j,\,\, 0<\lambda_1<\ldots<\lambda_j\leq \lambda_{j+1}\leq \ldots,\,\,\lim_{j\to\infty}\lambda_j=\infty.\\
\end{align*}
\end{proposition}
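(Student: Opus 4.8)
The plan is to diagonalize $A=-\mathbb{P}\Delta$ explicitly in the Fourier basis, which makes all three claimed properties (symmetry and self-adjointness, compactness of $A^{-1}$, and the existence of the eigenbasis) transparent, and then to invoke the spectral theorem for compact self-adjoint operators exactly as the statement indicates.

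First I would record how $A$ acts on Fourier modes. Both $-\Delta$ and the Leray projector $\mathbb{P}$ are Fourier multipliers on $\mathbb{T}^3$: for $v=\sum_k\hat v_k e^{ik\cdot x}$ one has $\widehat{(-\Delta v)}_k=\abs{k}^2\hat v_k$ and $\widehat{(\mathbb{P}v)}_k=\hat v_k-\abs{k}^{-2}(k\cdot\hat v_k)k$, so the two operators commute. On the divergence-free, mean-zero space $\mathcal{L}^2$ every mode satisfies $k\cdot\hat u_k=0$ and $k\neq0$, hence $\mathbb{P}u=u$ and $Au=-\Delta u$ with $\widehat{(Au)}_k=\abs{k}^2\hat u_k$. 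Thus $A$ is the multiplier operator $\hat u_k\mapsto\abs{k}^2\hat u_k$ with natural domain $D(A)=\set{u\in\mathcal{L}^2:\ \sum_k\abs{k}^4\abs{\hat u_k}^2<\infty}=\mathcal{H}^2\cap\mathcal{L}^2$. From this representation symmetry and positivity are immediate: for $u,v\in D(A)$ one has $\inner{Au,v}=\sum_k\abs{k}^2\hat u_k\cdot\overline{\hat v_k}=\inner{u,Av}$, and $\inner{Au,u}=\sum_k\abs{k}^2\abs{\hat u_k}^2\geq\sum_k\abs{\hat u_k}^2=\norm{u}_{L^2}^2>0$ for $u\neq0$, since $\abs{k}\geq1$ on $\mathcal{L}^2$. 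Self-adjointness I would obtain by checking $D(A^\ast)=D(A)$: if $w\in D(A^\ast)$ then $u\mapsto\sum_k\abs{k}^2\hat u_k\cdot\overline{\hat w_k}$ is a bounded $\mathcal{L}^2$-functional on the dense set $D(A)$, which forces $(\abs{k}^2\hat w_k)_k\in\ell^2$, i.e. $w\in D(A)$; the reverse inclusion is the symmetry just shown.

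Next I would show $A^{-1}$ is compact. Since $\abs{k}^2\geq1$, $A$ is boundedly invertible with $\widehat{(A^{-1}u)}_k=\abs{k}^{-2}\hat u_k$. Truncating to $\abs{k}\leq N$ produces finite-rank operators $A_N^{-1}$, and $\norm{A^{-1}-A_N^{-1}}\leq\sup_{\abs{k}>N}\abs{k}^{-2}\to0$ as $N\to\infty$; an operator-norm limit of finite-rank operators is compact. Because $A^{-1}$ is compact, self-adjoint and positive, the spectral theorem for compact self-adjoint operators (the Hilbert--Schmidt theorem invoked in the statement) yields an orthonormal basis $\set{w_j}$ of $\mathcal{L}^2$ consisting of eigenvectors of $A^{-1}$ with positive eigenvalues $\mu_j\to0$; then $Aw_j=\lambda_jw_j$ with $\lambda_j=\mu_j^{-1}>0$ and $\lambda_j\to\infty$, and reordering gives the monotone enumeration $0<\lambda_1\le\lambda_2\le\cdots$ claimed.

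As a concrete check, and an alternative to the abstract route, one can exhibit the basis directly: for each $k\in\mathbb{Z}^3\setminus\set{0}$ choose an orthonormal pair spanning the plane $k^{\perp}$ and form the associated (real, reality-condition-respecting) trigonometric vector fields; these are eigenfunctions of $A$ with eigenvalue $\abs{k}^2$, the admissible eigenvalues being exactly the positive integers representable as $\abs{k}^2$, each of finite multiplicity. The only point genuinely requiring care is the bookkeeping at the interface of $\mathbb{P}$ and $\Delta$ — verifying that the divergence-free, mean-zero constraint is preserved so that $A$ truly reduces to $-\Delta$ on $\mathcal{L}^2$ — after which every assertion is routine.
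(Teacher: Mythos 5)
Your proof is correct, but it takes a genuinely different route from the paper, which in fact offers no proof at all: Proposition 3.1 is quoted as a known result from Chapter 4 of Constantin--Foias \cite{CF}, where it is established for general bounded domains by abstract functional-analytic means (invertibility of $A$ via the variational formulation, compactness of $A^{-1}$ from the Rellich embedding, then the Hilbert--Schmidt theorem), after which the paper merely records the explicit torus eigenfunctions $w_{k,j}(x)=\bigl(e_j-\frac{k_j k}{\abs{k}^2}\bigr)e^{ik\cdot x}$ with $\lambda_{k,j}=\abs{k}^2$. You instead exploit the torus structure from the outset: on $\mathbb{T}^3$ both $-\Delta$ and $\mathbb{P}$ are Fourier multipliers, so they commute, and on divergence-free mean-zero fields $A$ reduces to the scalar multiplier $\hat u_k\mapsto\abs{k}^2\hat u_k$; symmetry, self-adjointness on the maximal domain, bounded invertibility, and compactness of $A^{-1}$ as an operator-norm limit of finite-rank truncations are then elementary, and the spectral theorem for compact self-adjoint operators finishes the argument. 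Your route buys self-containedness and explicitness --- indeed your description of the eigenbasis (an orthonormal pair spanning $k^{\perp}$ for each $k\neq 0$, combined to respect the reality condition) is cleaner than the paper's list $w_{k,1},w_{k,2},w_{k,3}$, which consists of three vectors spanning the two-dimensional plane $k^{\perp}$ and hence is not orthonormal as written. What the citation to \cite{CF} buys is generality: in a domain with boundary, $\mathbb{P}$ and $\Delta$ do not commute, the Stokes operator is not a restriction of $-\Delta$, and only the abstract compactness argument survives --- which is immaterial here since the paper works exclusively on the torus. One cosmetic remark: the enumeration $0<\lambda_1<\ldots<\lambda_j\leq\lambda_{j+1}\leq\ldots$ in the statement should be read as a non-decreasing listing with finite multiplicities, which is exactly what your reordering of $\mu_j^{-1}$ produces.
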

Moreover, in the case of $\mathbb{T}^3=(-\pi,\pi)^3$, the sequence of eigenvector functions $w_j^{,}s$ and eigenvalues $\lambda_j^{,}s$ are the sequences of functions $w_{k,j}$ and numbers $\lambda_{k,j}$,
$$  w_{k,j}(x)=\left(e_j-\frac{k_j k}{\abs{k}^2} \right) e^{i k\cdot x} ,\quad\ \lambda_{k,j}=\abs{k}^2,
$$
where $k=(k_1,k_2,k_3)\in\mathbb{Z}^3$, $k\neq 0$, $j=1,2,3$ and $\{e_j\}_{j=1,2,3}$ are the canonical basis in \ $\mathbb{R}^3$. So we know that each $w_j$ are not only in $\mathcal{L}^2$, but also in $\mathcal{G}^s_{r,\tau}$ for $\forall r>0$. Now we will show that there exists a solution to equation \eqref{PNS} for $a\in\mathcal{G}^s_{r,\tau}$ with $r>9/2, s\geq 1$, and $\tau(t)>0$ is a differentiable decreasing function of $t$. To this end, we first prove a priori estimate in the following Proposition.

\begin{proposition}\label{proposition1}
Let $r>9/2, s\geq1, a\in\mathcal{G}^s_{r,\tau_0}$ and $\tau(t)>0$ is a differentiable decreasing function of $t$ defined on $[0,T]$ with $\tau(0)=\tau_0>0$, where $0<T<T^\ast$ and $T^\ast$ is the maximal time of $H^r$ solution to \eqref{PNS} with respect to the initial data $a$. Let $u^\nu(t,x)\in L^\infty([0,T];\mathcal{G}^s_{r,\tau(\cdot)}(\mathbb{T}^3))\cap L^2([0,T];\mathcal{G}^s_{r+1,\tau(\cdot)}(\mathbb{T}^3))$ be the solution to \eqref{PNS}, then the following a priori estimates holds,
\begin{equation*}
\begin{aligned}
 & \norm{u^\nu(t,\cdot)}_{\mathcal{G}^s_{r,\tau(t)}} \leq G_T,\quad\,0<t\leq T,\\
 &\nu\int_0^t \norm{u^\nu(s,\cdot)}_{\mathcal{G}^s_{r+1,\tau(s)}}^2ds \leq M_T,\quad\,0<t\leq T\,.
\end{aligned}
\end{equation*}
With the same assumptions as above, let $u(t,x)\in L^\infty([0,T];\mathcal{G}^s_{r+1,\tau(\cdot)}(\mathbb{T}^3))$ be the solution to \eqref{PE}, we also have
\begin{equation*}
  \norm{u(t,\cdot)}_{\mathcal{G}^s_{r,\tau(t)}} \leq G_T,\quad\,0<t\leq T,
\end{equation*}
Furthermore the uniform radius $\tau(t)$ is given by
\begin{equation*}
 \tau(t)= \frac{1}{e^{C_1 t}\frac{1}{\tau_0}+\frac{C_2}{C_1}(e^{C_1 t}-1)},
\end{equation*}
where $C,C_1,C_2,G_T,M_T$ are constants depending on $a,r,s,T$.
\end{proposition}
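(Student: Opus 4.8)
The plan is to run a Gevrey energy estimate directly on \eqref{PNS}, exploiting the decay of the radius $\tau(t)$ to absorb the worst part of the nonlinear term. First I would differentiate the time-dependent Gevrey norm. Writing $u^\nu=\sum_j\hat u_j e^{ij\cdot x}$ and using that $\Lambda$ and $e^{\tau\Lambda^{1/s}}$ are Fourier multipliers commuting with the Leray projector $\mathbb P$, a direct computation gives
\begin{equation*}
\frac12\frac{d}{dt}\norm{u^\nu}_{\mathcal G^s_{r,\tau(t)}}^2=\dot\tau\,\norm{u^\nu}_{\mathcal G^s_{r+\frac{1}{2s},\tau}}^2-\nu\norm{u^\nu}_{\mathcal G^s_{r+1,\tau}}^2-\inner{\Lambda^r e^{\tau\Lambda^{1/s}}(u^\nu\cdot\nabla u^\nu),\,\Lambda^r e^{\tau\Lambda^{1/s}}u^\nu},
\end{equation*}
where the $\dot\tau$ term comes from the time derivative hitting the exponential weight, the dissipation term from $\nu A u^\nu$, and the Leray projector has been discarded in the nonlinear term because $\Lambda^r e^{\tau\Lambda^{1/s}}u^\nu$ is divergence free and $\mathbb P$ is self-adjoint. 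This identity is legitimate under the assumed regularity $u^\nu\in L^\infty([0,T];\mathcal G^s_{r,\tau(\cdot)})\cap L^2([0,T];\mathcal G^s_{r+1,\tau(\cdot)})$.

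Next I would insert the bound of Lemma \ref{Lemma3} for the nonlinear term. The decisive observation is that $\dot\tau\,\norm{u^\nu}_{\mathcal G^s_{r+\frac1{2s},\tau}}^2$ is \emph{negative} (since $\tau$ is decreasing) and is of exactly the same order $\mathcal G^s_{r+\frac1{2s},\tau}$ as the dangerous terms produced by Lemma \ref{Lemma3}. I would therefore choose $\tau$ so that
\begin{equation*}
\dot\tau+C\tau\norm{u^\nu}_{H^r}+C\tau^2\pare{\norm{u^\nu}_{H^r}+\norm{u^\nu}_{\mathcal G^s_{r,\tau}}}\le 0,
\end{equation*}
which forces those high-regularity contributions to be absorbed and dropped. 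Using the uniform-in-$\nu$ bound $\norm{u^\nu}_{H^r}\le K$ on $[0,T]$ supplied by Theorem \ref{th 2.1}, together with a bootstrap bound $\norm{u^\nu}_{\mathcal G^s_{r,\tau}}\le G_T$ (to be closed below), it suffices to let $\tau$ solve $\dot\tau=-C_1\tau-C_2\tau^2$ with $\tau(0)=\tau_0$, $C_1=CK$ and $C_2=C(K+G_T)$. Setting $z=1/\tau$ turns this into the linear ODE $\dot z=C_1 z+C_2$, whose solution is precisely the stated formula \eqref{1.6}.

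After this cancellation the estimate closes into a differential inequality involving only the $\mathcal G^s_{r,\tau}$ and $H^r$ norms,
\begin{equation*}
\frac12\frac{d}{dt}\norm{u^\nu}_{\mathcal G^s_{r,\tau}}^2+\nu\norm{u^\nu}_{\mathcal G^s_{r+1,\tau}}^2\le C\norm{u^\nu}_{H^r}\norm{u^\nu}_{\mathcal G^s_{r,\tau}}^2+C\norm{u^\nu}_{H^r}^2\norm{u^\nu}_{\mathcal G^s_{r,\tau}}.
\end{equation*}
Dropping the non-negative dissipation and dividing by $\norm{u^\nu}_{\mathcal G^s_{r,\tau}}$ gives $\frac{d}{dt}\norm{u^\nu}_{\mathcal G^s_{r,\tau}}\le CK\norm{u^\nu}_{\mathcal G^s_{r,\tau}}+CK^2$, so Gronwall's inequality yields an explicit bound $\norm{u^\nu(t)}_{\mathcal G^s_{r,\tau(t)}}\le W(t)$ depending only on $K,T,\norm{a}_{\mathcal G^s_{r,\tau_0}}$ and \emph{not} on $\nu$. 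I would set $G_T:=2W(T)$; since $W$ is independent of $C_2$, a standard continuity argument starting from $\norm{a}_{\mathcal G^s_{r,\tau_0}}<G_T$ closes the bootstrap and proves the first estimate. Integrating the full inequality and using $\norm{u^\nu}_{\mathcal G^s_{r,\tau}}\le G_T$ to bound the right-hand side by a constant then produces $\nu\int_0^t\norm{u^\nu}_{\mathcal G^s_{r+1,\tau}}^2\,ds\le M_T$.

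The Euler estimate is identical, with the dissipation term simply absent (so the inequality for $\norm{u}_{\mathcal G^s_{r,\tau}}$ merely loses a helpful negative term) and the uniform $H^r$ bound for $u$ supplied by part (2) of Theorem \ref{th 2.1}. The hard part will be the coupled choice of $\tau$ and $G_T$: the coefficient $C_2$ of the radius ODE depends on the very Gevrey bound $G_T$ one is trying to establish. The resolution is that the negative $\dot\tau$ term only needs to \emph{dominate}, so enlarging $C_2$ is harmless (it merely shrinks $\tau$ faster), while the final bound $W$ arises from the reduced inequality after cancellation and is independent of $C_1,C_2$; this removes the apparent circularity. The second delicate point is uniformity in $\nu$, which hinges on the dissipation entering with a favorable sign (hence discardable) and on the $H^r$ bound $K$ from Theorem \ref{th 2.1} being $\nu$-independent, making $T$, $G_T$ and the radius $\tau(t)$ all independent of $\nu$.
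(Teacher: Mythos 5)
Your proposal is correct and follows essentially the same route as the paper: the same Gevrey energy identity, insertion of Lemma \ref{Lemma3}, absorption of the $\mathcal{G}^s_{r+\frac{1}{2s},\tau}$ terms by requiring $\tau'+C\tau\norm{u^\nu}_{H^r}+C\tau^2(\norm{u^\nu}_{H^r}+\norm{u^\nu}_{\mathcal{G}^s_{r,\tau}})\le 0$, the linearizing substitution $z=1/\tau$ giving \eqref{1.6}, Gronwall on the reduced inequality using the $\nu$-independent $H^r$ bound, and integration of the dissipation term for $M_T$. The only differences are cosmetic: you invoke Theorem \ref{th 2.1} for the uniform $H^r$ bound where the paper derives it from the Riccati estimate \eqref{3.7} of \cite{MB}, and you spell out the continuity/bootstrap argument resolving the $G_T$--$C_2$ circularity, which the paper leaves implicit.
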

\begin{proof}
Applying $\Lambda^r e^{\tau\Lambda^{1/s}}$ on both sides of \eqref{PNS} and taking the $L^2$ inner product of both sides with $\Lambda^{r} e^{\tau \Lambda^{1/s}} u^\nu$, one has
\begin{equation}\label{3.1}
\begin{split}
&\frac{1}{2}\frac{d}{dt}\norm{u^\nu(t,\cdot)}_{\mathcal{G}^{s}_{r,\tau(t)}}^2+
\nu\norm{u^\nu(t,\cdot)}_{\mathcal{G}^{s}_{r+1,\tau(t)}}^2\\
&\quad=\tau^\prime(t) \norm{u^\nu(t,\cdot)}_{\mathcal{G}^s_{r+\frac{1}{2s},\tau}}^2-\inner{\Lambda^r e^{\tau\Lambda^{1/s}}(u^\nu\cdot\nabla u^\nu),\Lambda^{r} e^{\tau \Lambda^{1/s}}u^\nu},\\
\end{split}
\end{equation}
where we use the fact that $\mathbb{P}$ commutes with $\Lambda^r e^{\tau\Lambda^{1/s}}$ and $\mathbb{P}$ is symmetric. Now we consider the right hand side of \eqref{3.1}.
By \eqref{2.2} in Lemma \ref{Lemma3}, we have from \eqref{3.1}, for convenience, we sometimes suppress the dependence of $u^\nu$ and $\tau$ in $t$,
\begin{equation}\label{3.2}
\begin{split}
&\frac{1}{2}\frac{d}{dt}\norm{u^\nu}_{\mathcal{G}^{s}_{r,\tau}}^2 +\nu\norm{u_\nu}_{\mathcal{G}^{s}_{r+1,\tau}}^2 \leq C\norm{u^\nu}_{H^r}\norm{u^\nu}_{\mathcal{G}^s_{r,\tau}}^2+C\norm{u^\nu}_{H^r}^2\norm{u^\nu}_{\mathcal{G}^s_{r,\tau}}\\
&+(\tau^\prime+C\tau\norm{u^\nu}_{H^r}+C\tau^2\norm{u^\nu}_{H^r}+
C\tau^2\norm{u^\nu}_{\mathcal{G}^s_{r,\tau}})\norm{u^\nu}_{\mathcal{G}^{s}_{r+\frac{1}{2s},\tau}}^2,
\end{split}
\end{equation}
where $C$ is a constant depending on $r,s$.
Now if the radius of Gevrey class $\tau(t)$ is smooth and decreaseing fast enough such that the following inequality holds,
\begin{equation}\label{3.3}
\tau^\prime+C\tau\norm{u^\nu}_{H^r}+C\tau^2\norm{u^\nu}_{H^r}+C\tau^2\norm{u^\nu}_{\mathcal{G}^s_{r,\tau}}\leq 0.
\end{equation}
Then \eqref{3.2} implies
\begin{equation}\label{3.4}
\frac{1}{2}\frac{d}{dt}\norm{u^\nu}_{\mathcal{G}^{s}_{r,\tau}}^2+\nu\norm{u^\nu}_{\mathcal{G}^{s}_{r+1,\tau}}^2\leq C\norm{u^\nu}_{H^r}\norm{u^\nu}_{\mathcal{G}^s_{r,\tau}}^2+C\norm{u^\nu}_{H^r}^2\norm{u^\nu}_{\mathcal{G}^s_{r,\tau}}.
\end{equation}
As $\nu>0$, it can be obtained directly from \eqref{3.4},
\begin{equation}\label{3.5}
\frac{d}{dt}\norm{u^\nu}_{\mathcal{G}^{s}_{r,\tau}}\leq C\norm{u^\nu}_{H^r}\norm{u^\nu}_{\mathcal{G}^s_{r,\tau}}+C\norm{u^\nu}_{H^r}^2,
\end{equation}
By Grownwall's inequality in \eqref{3.5}, we have for $0<t< T_\ast$,
\begin{equation}\label{3.6}
\norm{u^\nu(t)}_{\mathcal{G}^{s}_{r,\tau(t)}}\leq \tilde{g}(t)\left(\norm{a}_{\mathcal{G}^s_{r,\tau_0}}+\int_0^t C\tilde{g}(s)^{-1} \norm{u^\nu(s)}_{H^r}^2 ds\right)\triangleq A(t),
\end{equation}
where $\tilde{g}(t)=e^{\int_0^t C\norm{u^\nu(s,\cdot)}_{H^r}ds}$ and $T_\ast$ is the maximal time interval of $H^r$ solution. It has been known that $T_\ast$ is independent of $\nu$. Moreover, it follows the a priori estimate for $H^r$ solution for Navier Stokes equation in \cite{MB},
\begin{equation}\label{3.7}
\frac{d}{dt} \norm{u^\nu(t,\cdot)}_{H^r}^2 \leq C\norm{u^\nu(t,\cdot)}_{H^r}\norm{u^\nu(t,\cdot)}_{H^r}^2,\quad 0<t<T_\ast,
\end{equation}
where C is a constant depending on $r$. Then, on one side we have
\begin{equation}\label{3.8}
 \norm{u^\nu(t)}_{H^r}^2 \leq C \tilde{g}(t) \norm{a}_{H^r}^2,\quad 0<t<T_\ast.
\end{equation}
And on the other side, let $0<T<T_\ast$, then for $0<t<T$,
\begin{equation}\label{3.9}
 \norm{u^\nu(t,\cdot)}_{H^r} \leq \frac{\norm{a}_{H^r}}{1-Ct\norm{a}_{H^r}}\leq \frac{\norm{a}_{H^r}}{1-CT\norm{a}_{H^r}}\triangleq C_T.
\end{equation}
With \eqref{3.7},\eqref{3.8} and \eqref{3.9}, we have
\begin{equation}\label{Abound}
\begin{aligned}
 \norm{u^\nu(t,\cdot)}_{\mathcal{G}^s_{r,\tau(t)}} &\leq A(t)\\
  &=e^{\int_0^t C\norm{u^\nu(s)}_{H^r}ds}\left(\norm{a}_{G^s_{r,\tau_0}}+\int_0^t C\tilde{g}(s)^{-1} \norm{u^\nu(s)}_{H^r}^2 ds\right)\\
      &\leq e^{CC_T t}\left(\norm{a}_{G^s_{r,\tau_0}}+ C\norm{a}_{H^r}^2 t\right)\\
      &\leq e^{CC_T T}\left(\norm{a}_{G^s_{r,\tau_0}}+ C\norm{a}_{H^r}^2 T\right)\triangleq G_T,\quad 0\leq t\leq T.
\end{aligned}
\end{equation}
In fact a sufficient condition for \eqref{3.3} to hold is
\begin{equation}\label{3.10}
\tau^\prime(t)+C\tau(t) C_T+ C\tau^2(t)C_T +C\tau^2(t)G_T= 0.
\end{equation}
Then solving the ordinary differential equation \eqref{3.10},
\begin{equation}\label{3.11}
\begin{aligned}
 \tau(t) &= \frac{1}{e^{CC_T t}\frac{1}{\tau_0}+\frac{CC_T+CG_T}{CC_T}(e^{CC_T t}-1)}.\\
\end{aligned}
\end{equation}
We can obtain \eqref{1.6} by arranging the constants in \eqref{3.11},
\begin{equation}\label{3.12}
 \tau(t)= \frac{1}{e^{C_1 t}\frac{1}{\tau_0}+\frac{C_2}{C_1}(e^{C_1 t}-1)},
\end{equation}
where $C_1,C_2$ are constants depending on $r,s,T,a$. Then \eqref{3.12} proves \eqref{1.6} in Remark \ref{remark}.
Integrating \eqref{3.4} form $0$ to $t$, we have, for $0<t<T$,
\begin{equation}\label{3.13}
\nu\int_0^t \norm{u^\nu(s,\cdot)}_{\mathcal{G}^{s}_{r+1,\tau(s)}}^2ds\leq M_T,\ \ 0<t<T<T_\ast,
\end{equation}
where $M_T$ depends on $T,a,r,s,\tau_0$. It should be noted that all of the above estimates are independent of $\nu$, so we let $\nu=0$ in \eqref{3.2}, and proceed exactly as above, then we can obtain similar results for the a priori estimate for solution $u$ to equation \eqref{PE}.
\end{proof}

With the estimates in Proposition \ref{proposition1}, we can implement the standard Faedo-Fourier-Galerkin approximation as in \cite{JL,RT} to prove the existence of such $u^\nu$ and $u$ in the function space of Gevrey class s $\mathcal{G}^s_{r,\tau}$.

\begin{theorem}\label{existence}
There exists a unique solution $u^\nu$ to \eqref{PNS} such that
\begin{equation*}
 u^\nu \in L^\infty([0,T],\mathcal{G}^s_{r,\tau(\,\cdot\,)}).
\end{equation*}
Similarly there exists a unique solution $u$ to \eqref{PE} such that
\begin{equation*}
 u \in L^\infty([0,T],\mathcal{G}^s_{r,\tau(\,\cdot\,)}).
\end{equation*}
\end{theorem}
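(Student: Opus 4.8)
The plan is to construct both solutions by a Faedo--Galerkin scheme built on the Stokes eigenbasis of Proposition \ref{prop4}, and to close the argument with the uniform a priori bounds of Proposition \ref{proposition1}. For fixed $n$, let $\mathbb{P}_n$ be the orthogonal $\mathcal{L}^2$-projection onto the finite-dimensional space $P_n=\mathrm{span}\{w_1,\dots,w_n\}$ spanned by the first $n$ Stokes eigenfunctions, and seek $u^\nu_n(t)\in P_n$ solving the projected system
\begin{equation*}
\frac{d}{dt}u^\nu_n+\nu A u^\nu_n+\mathbb{P}_n\mathbb{P}(u^\nu_n\cdot\nabla u^\nu_n)=0,\qquad u^\nu_n|_{t=0}=\mathbb{P}_n\mathbb{P}a.
\end{equation*}
Writing $u^\nu_n=\sum_{\ell\le n}c_\ell(t)w_\ell$ converts this into an ODE system for the coefficients $c_\ell$ whose right-hand side is quadratic, hence locally Lipschitz; Cauchy--Lipschitz then yields a unique $C^1$ solution on a maximal interval $[0,T_n)$.

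Next I would transfer the a priori estimate of Proposition \ref{proposition1} to the Galerkin level. Since each $w_{k,j}$ is a Fourier mode, the operator $\Lambda^r e^{\tau\Lambda^{1/s}}$ is diagonal in the Stokes eigenbasis and therefore commutes with both $\mathbb{P}_n$ and $\mathbb{P}$, while $\mathbb{P}_n$ is self-adjoint and idempotent. Consequently, testing the projected equation against $\Lambda^{2r}e^{2\tau\Lambda^{1/s}}u^\nu_n$ reproduces \eqref{3.1} with $u^\nu$ replaced by $u^\nu_n$: the crucial cancellation $\inner{u^\nu_n\cdot\nabla\Lambda^r e^{\tau\Lambda^{1/s}}u^\nu_n,\Lambda^r e^{\tau\Lambda^{1/s}}u^\nu_n}=0$ survives because $u^\nu_n$ is divergence free, and the remaining nonlinear term is controlled by Lemma \ref{Lemma3}. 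Choosing $\tau(t)$ exactly as in \eqref{3.12} absorbs the dangerous $\mathcal{G}^s_{r+\frac1{2s},\tau}$ contribution, so one obtains $\norm{u^\nu_n(t)}_{\mathcal{G}^s_{r,\tau(t)}}\le G_T$ and $\nu\int_0^t\norm{u^\nu_n}_{\mathcal{G}^s_{r+1,\tau}}^2\le M_T$ with $G_T,M_T$ independent of $n$ and $\nu$. These bounds preclude blow-up of the coefficients, whence $T_n=T$ for all $n$.

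With uniform bounds in hand I would extract a limit. Because $\tau$ is decreasing, every $\mathcal{G}^s_{r,\tau(t)}$ embeds continuously into the fixed space $\mathcal{G}^s_{r,\tau(T)}$, so $\{u^\nu_n\}_n$ is bounded in $L^\infty([0,T];\mathcal{G}^s_{r,\tau(T)})$ and Banach--Alaoglu supplies a weakly-$*$ convergent subsequence $u^\nu_n\rightharpoonup u^\nu$. To pass to the limit in the quadratic term I would read off a uniform bound for $\frac{d}{dt}u^\nu_n$ in $L^2([0,T];\mathcal{H}^{r-2})$ from the equation, and invoke an Aubin--Lions compactness argument to upgrade it to strong convergence in $C([0,T];\mathcal{H}^{r'})$ for some $r'\in(5/2,r)$; this range is available since $r>\frac92$, and it makes $\mathcal{H}^{r'-1}$ an algebra, so that $\mathbb{P}_n\mathbb{P}(u^\nu_n\cdot\nabla u^\nu_n)\to\mathbb{P}(u^\nu\cdot\nabla u^\nu)$ in the sense of distributions and $u^\nu$ solves \eqref{PNS}. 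The pointwise-in-time bound $\norm{u^\nu(t)}_{\mathcal{G}^s_{r,\tau(t)}}\le G_T$ is then recovered from weak lower semicontinuity of the norm, giving $u^\nu\in L^\infty([0,T];\mathcal{G}^s_{r,\tau(\,\cdot\,)})$.

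Finally, uniqueness follows from a standard energy estimate on the difference $w=u^\nu_1-u^\nu_2$ of two solutions: testing its equation in $\mathcal{H}^{r-1}$, where the convection term is controlled using the $\mathcal{G}^s_{r,\tau}$ bounds already established, and applying Gronwall forces $w\equiv0$. The Euler case \eqref{PE} is identical with $\nu=0$, since the dissipative term $\nu\norm{u_n}_{\mathcal{G}^s_{r+1,\tau}}^2$ appears only with a favorable sign and may simply be discarded, so Proposition \ref{proposition1} and the entire scheme apply verbatim. I expect the main obstacle to be the passage to the limit in the nonlinear term while respecting the time-dependent radius $\tau(t)$: one must secure strong convergence in a topology strong enough to handle the quadratic term yet compatible with the shrinking Gevrey spaces, which is precisely where the Aubin--Lions step and the lower-semicontinuity recovery of the $\tau(t)$-weighted bound do the essential work.
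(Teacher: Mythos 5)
Your proposal is correct and follows essentially the same route as the paper: a Faedo--Galerkin scheme on the Stokes eigenbasis, transfer of the a priori bounds of Proposition \ref{proposition1} to the approximants via the commutation and symmetry of $\Lambda^r e^{\tau\Lambda^{1/s}}$ with the projections, a bound on $\frac{d}{dt}u^\nu_n$, and an Aubin--Lions type compactness argument to pass to the limit in the nonlinear term. The differences are only technical refinements: the paper extends the Galerkin lifespan using the simpler $L^2$ energy identity and bounds $\frac{d}{dt}u^\nu_n$ in $L^\infty([0,T];L^2)$ rather than $L^2([0,T];\mathcal{H}^{r-2})$, while you are more explicit about handling the shrinking radius $\tau(t)$ and you sketch the uniqueness argument that the paper's proof leaves implicit.
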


\begin{proof}
The method of proof of existence is based on Galerkin approximations and the priori estimate in Proposition \ref{proposition1}. For a fixed positive integer $n$, we will look for a sequence of functions $u^\nu_n(t,\cdot)$ with $n\in \mathbb{N}$ in the form
$$
 u^\nu_n(t,x)=\sum_{j=1}^n \alpha_{j,n}^\nu(t) \omega_j,
$$
where $\{\omega_j\}_{j=1}^\infty$ are the orthonormal basis in Proposition \ref{prop4}.
Let $W_n$ be the space spanned by $\{w_1,w_2,\cdots,w_n\}$,
and $\mathbb{\chi}_n$ is the orthogonal projector in $\mathcal{L}^2$ into $W_n$.
The approximating equation is as follows,
\begin{equation}\label{3.18}
\left\{
\begin{aligned}
\frac{d}{dt}u^\nu_n+\nu A u^\nu_n+\mathbb{\chi}_n\mathbb{P}(u^\nu_n\cdot\nabla u^\nu_n) &=0,\ \\
u^\nu_n|_{t=0} &=\mathbb{\chi}_n a
\end{aligned}
\right.
\end{equation}
Taking the $L^2$ inner product with $w_j, j=1,2,\ldots, n$, then the equation system \eqref{3.18} is equivalent with the following ordinary differential equation system,
\begin{equation}\label{3.19}
\left\{
\begin{aligned}
\frac{d}{dt} \alpha_{j,n}^\nu(t) +\nu \lambda_j \alpha_{j,n}^\nu +\sum_{k,\ell}b(\omega_k,\omega_\ell,\omega_j) \alpha_{k,n}^\nu \alpha_{\ell, n}^\nu &=0,\quad\,j=1,2,\ldots,n,\\
 \alpha_{j,n}^\nu(0) &=\inner{a,\omega_j},
\end{aligned}
\right.
\end{equation}
where $b(\omega_k,\omega_\ell,\omega_j)=\inner{\omega_k\cdot\nabla\omega_\ell,\omega_j}$ satisfying $b(\omega_k,\omega_\ell,\omega_j)=-b(\omega_k,\omega_j,\omega_\ell)$. By the standard ordinary differential equation theory, there exists a solution to \eqref{3.19} local in time interval $[0,T_n)$. In order to show that $T_n$ can be extended to $T$, we multiply with $\alpha_{j,n}^\nu(t)$ on both sides of \eqref{3.19} and take sum over $1\leq j\leq n$. We have
\begin{equation}\label{3.20}
\frac{1}{2}\frac{d}{dt} \left(\sum_{j=1}^n \alpha_{j,n}^\nu(t)^2\right)+\nu\lambda_j \sum_{j}\alpha_{j,n}^\nu(t)^2 = 0,
\end{equation}
because
$$
\sum_{j,k,\ell}b(\omega_k,\omega_\ell,\omega_j)\alpha^\nu_{k,n}\alpha^\nu_{\ell,n}\alpha^\nu_{j,n}=0.
$$
Moreover, from \eqref{3.20}, we have  
$$
 \norm{u^\nu_n(t,\cdot)}_{L^2}=\sum_{j=1}^n \alpha_{j,n}^\nu(t)^2\leq \sum_{j=1}^n \alpha_{j,n}^\nu(0)^2 \leq \norm{a}_{L^2}^2,\quad \forall t>0
$$
Then we have for every $T_n$, it can be extended to arbitrary large, so it can be extended to $T$. And we also obtain
\begin{equation*}
 u^\nu_n \ \text{remains bounded in}\ L^\infty\left([0,T];L^2\right),\quad \forall n.\
\end{equation*}
Moreover, we obtain a solution $u^\nu_n(t,x)=\sum_{j=1}^n \alpha_{j,n}^\nu(t)w_j(x)$ to \eqref{3.18} and we know that $u_n^\nu(t,x)\in \mathcal{G}^{s}_{r+1,\tau(t)}$ for $0<t<T$ because it is only finite sum of $w_j$ for fixed $n$. We then want to obtain the uniform Gevrey class norm bound for $u^\nu_n$. To this end, we first apply $\Lambda^r e^{\tau \Lambda^{1/s}}$ on both sides of \eqref{3.18} and then take the $L^2$ inner product with $\Lambda^r e^{\tau\Lambda^{1/s}}$ to obtain
\begin{equation*}
\begin{split}
\frac{1}{2}\frac{d}{dt} &\norm{u^\nu_n(t,\cdot)}_{\mathcal{G}^{s}_{r,\tau(t)}}^2+
\nu\norm{u^\nu_n(t,\cdot)}_{\mathcal{G}^{s}_{r+1,\tau}}^2\\
&=\tau^\prime(t) \norm{u^\nu_n(t,\cdot)}_{\mathcal{G}^s_{r+\frac{1}{2s},\tau}}^2-\inner{\Lambda^r e^{\tau\Lambda^{1/s}}\mathbb{\chi}_n\mathbb{P}(u^\nu_n\cdot\nabla u^\nu_n),\Lambda^{r} e^{\tau(t) \Lambda^{1/s}}u^\nu_n}.\\
\end{split}
\end{equation*}
We note that the operator $\mathbb{\chi}_n$ and $\mathbb{P}$ commute with $\Lambda^r e^{\tau\Lambda^{1/s}}$, and they are symmetric, then
$$
 \inner{\Lambda^r e^{\tau\Lambda^{1/s}}\mathbb{\chi}_n\mathbb{P}(u^\nu_n\cdot\nabla u^\nu_n),\Lambda^{r} e^{\tau(t) \Lambda^{1/s}}u^\nu_n}=\inner{\Lambda^r e^{\tau\Lambda^{1/s}}(u^\nu_n\cdot\nabla u^\nu_n),\Lambda^{r} e^{\tau(t) \Lambda^{1/s}}u^\nu_n}.
$$
With the arguments in Proposition \ref{proposition1}, we have,  
\begin{equation*}
\norm{u_\nu^n(t,\cdot)}_{\mathcal{G}^s_{r,\tau(t)}}\leq G_T,\quad 0<t\leq T<T^\ast,\quad \forall n.
\end{equation*}
Thus
\begin{equation}\label{3.23}
 u^\nu_n\,\,\text{remains bounded in}\,L^\infty([0,T];\mathcal{G}^s_{r,\tau(\,\cdot\,)}).
\end{equation}
In order to pass to the limit in the nonlinear term using a compactness theorem, we need to estimate on $\frac{d u_n^\nu}{dt}$. From \eqref{3.18}, we have
\begin{equation*}
 \frac{d u^\nu_n}{dt} =-\nu A u^\nu_n- \mathbb{\chi}_n\mathbb{P} (u^\nu_n\cdot\nabla u^\nu_n).
\end{equation*}
Then we obtain
\begin{equation*}
\begin{aligned}
 \norm{\frac{d u^\nu_n}{dt}}_{L^2} &\leq \norm{\mathbb{\chi}_n\mathbb{P} (u^\nu_n\cdot\nabla u^\nu_n)}_{L^2}+\norm{\nu A u^\nu_n}_{L^2}\\
                                   &\leq \norm{u^\nu_n\cdot\nabla u^\nu_n}_{L^2}+\nu_0\norm{u^\nu_n}_{H^r}\\
                                   &\leq C\norm{u^\nu_n}_{H^r}^2+\nu_0\norm{u^\nu_n}_{H^r}.
\end{aligned}
\end{equation*}
We recall that
$$
 \norm{u^\nu_n(t,\cdot)}_{H^r}\leq C_T,\quad\,0<t\leq T,\quad\,\forall n.
$$
So we obtain
\begin{equation}\label{3.25}
 \frac{d}{dt}u^\nu_m\, \text{remains bounded in}\,L^\infty([0,T];L^2).
\end{equation}
By \eqref{3.23} and \eqref{3.25}, noting that $H^r(\mathbb{T}^3)$ is compactly embedded in $L^2(\mathbb{T}^3)$ from Rellich-Kondrachov Compactness Theorem in \cite{EV}, a compactness theorem in \cite{JL,RT} indicates the existence of the limit $u^\nu \in L^\infty([0,T];\mathcal{G}^s_{r,\tau(\,\cdot\,)})$ of a subsequence of $u^\nu_m$ such that
\begin{equation}\label{3.26}
\left\{
\begin{aligned}
 \frac{d}{dt}\inner{u^\nu(t),v}-\nu\inner{\Delta u^\nu(t),v}+\inner{u^\nu(t)\cdot\nabla u^\nu(t),v} &=0,\ \ \forall v\in \mathcal{L}^2\\
  u^\nu(0) &=a
\end{aligned}
\right.
\end{equation}
For Euler equations, one can take very similar approach to obtain the existence of solution in Gevrey class space and we omit the details here. Thus we prove Proposition \ref{existence}.
\end{proof}
It remains to show that $u^\nu$ is the solution of \eqref{1.1}. In fact it can be obtained from \eqref{3.26} that
\begin{equation*}
 \mathbb{P}\left\{\frac{d}{dt} u^\nu(t) -\nu \Delta u^\nu(t)+ u^\nu(t)\cdot\nabla u^\nu(t)\right\}=0.
\end{equation*}
So there exists a scalar function $p^\nu$ such that
\begin{equation*}
 \frac{d}{dt} u^\nu(t) -\nu \Delta u^\nu(t)+ u^\nu(t)\cdot\nabla u^\nu(t)+\nabla p^\nu=0,
\end{equation*}
where $p^\nu$ is unique up to a constant, and $p^\nu$ satisfies
\begin{equation}\label{new2}
\begin{aligned}
 -\Delta p^\nu =\nabla\cdot(u^\nu\cdot\nabla u^\nu),
\end{aligned}
\end{equation}
with periodic boundary condition. For the regularity of the pressure $p^\nu(t,x)$, we have the following Proposition.

\begin{proposition}\label{proposition}
Let $p^\nu$ satisfies \eqref{new2} , then the following estimate holds,
\begin{equation*}
\begin{aligned}
 \norm{p^\nu(t,\cdot)}_{\mathcal{G}^s_{r+1,\tau}}\leq CG_T^2, \quad\,0<t\leq T.
\end{aligned}
\end{equation*}
And for the pressure $p(t,x)$ in \eqref{1.2}, we also have
\begin{equation*}
  \norm{p(t,\cdot)}_{\mathcal{G}^s_{r+1,\tau}}\leq CG_T^2, \quad\,0<t\leq T\, ,
\end{equation*}
where $C,T,G_T$ are defined in Proposition \ref{proposition1}.
\end{proposition}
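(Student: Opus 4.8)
The plan is to work entirely on the Fourier side, exploiting that the inverse Laplacian gains two derivatives while the divergence-free condition allows the derivatives in the quadratic nonlinearity to be distributed symmetrically onto the two factors.

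First I would pass to Fourier coefficients in the Poisson equation \eqref{new2}. Writing $p^\nu=\sum_{k}\hat p^\nu_k e^{ik\cdot x}$ and recalling that the $k$-th coefficient of $u^\nu\cdot\nabla u^\nu$ equals $i\sum_j(\hat u^\nu_j\cdot(k-j))\hat u^\nu_{k-j}$, the relation $\abs{k}^2\hat p^\nu_k=(ik)\cdot(u^\nu\cdot\nabla u^\nu)^{\wedge}_k$ gives
\begin{equation*}
\abs{k}^2\hat p^\nu_k=-\sum_{j\in\mathbb{Z}^3}(\hat u^\nu_j\cdot(k-j))(k\cdot\hat u^\nu_{k-j}).
\end{equation*}
The crucial point is that the divergence-free condition $(k-j)\cdot\hat u^\nu_{k-j}=0$ lets me replace $k\cdot\hat u^\nu_{k-j}$ by $j\cdot\hat u^\nu_{k-j}$, so that each factor carries a frequency comparable to $\abs{j}$ or $\abs{k-j}$ rather than the full $\abs{k}$. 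Dividing by $\abs{k}^2$ I obtain the pointwise bound
\begin{equation*}
\abs{k}^{r+1}\abs{\hat p^\nu_k}\le \abs{k}^{r-1}\sum_{j}\abs{k-j}\abs{\hat u^\nu_j}\,\abs{j}\abs{\hat u^\nu_{k-j}}.
\end{equation*}

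Next I would insert the Gevrey weight. Since $s\ge1$, the map $t\mapsto t^{1/s}$ is subadditive, so $\abs{k}^{1/s}\le\abs{j}^{1/s}+\abs{k-j}^{1/s}$ and hence $e^{\tau\abs{k}^{1/s}}\le e^{\tau\abs{j}^{1/s}}e^{\tau\abs{k-j}^{1/s}}$; I attach each exponential to the corresponding factor. Using $\abs{k}^{r-1}\le C(\abs{j}^{r-1}+\abs{k-j}^{r-1})$ splits the right-hand side into two convolution sums, each of which is, after relabeling, of the form $\sum_j a_j c_{k-j}$ with $a_j=\abs{j}^r e^{\tau\abs{j}^{1/s}}\abs{\hat u^\nu_j}$ and $c_j=\abs{j}e^{\tau\abs{j}^{1/s}}\abs{\hat u^\nu_j}$.

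Finally I would take the $\ell^2$ norm in $k$ and apply the discrete Young (or H\"older--Minkowski) inequality exactly as in the estimate of $\mathcal{T}_{11}$ in Lemma \ref{Lemma3}: $\norm{a\ast c}_{\ell^2}\le\norm{a}_{\ell^2}\norm{c}_{\ell^1}$. Here $\norm{a}_{\ell^2}=\norm{u^\nu}_{\mathcal{G}^s_{r,\tau}}$, while Cauchy--Schwarz gives $\norm{c}_{\ell^1}\le(\sum_j\abs{j}^2(1+\abs{j}^2)^{-r})^{1/2}\norm{u^\nu}_{\mathcal{G}^s_{r,\tau}}$, the prefactor series converging because $r>9/2>5/2$. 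Combined with the a priori bound $\norm{u^\nu}_{\mathcal{G}^s_{r,\tau(t)}}\le G_T$ from Proposition \ref{proposition1}, this yields $\norm{p^\nu}_{\mathcal{G}^s_{r+1,\tau}}\le CG_T^2$; the Euler pressure $p$ solves the identical equation $-\Delta p=\nabla\cdot(u\cdot\nabla u)$, so the same computation applies verbatim. The only genuinely delicate point is the first step---using the divergence-free condition a second time to move a derivative off the factor that would otherwise carry the full frequency $\abs{k}$---since it is precisely this that makes the two derivatives gained from $\Delta^{-1}$ enough to close the estimate at regularity $r+1$; everything afterwards is the convolution bookkeeping already carried out for the nonlinear term.
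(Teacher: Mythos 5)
Your proposal is correct and follows essentially the same route as the paper's proof: the divergence-free swap $k\cdot\hat u^\nu_{k-j}=j\cdot\hat u^\nu_{k-j}$, the splitting $\abs{k}^{r-1}\leq C(\abs{j}^{r-1}+\abs{k-j}^{r-1})$, the exponential subadditivity $e^{\tau\abs{k}^{1/s}}\leq e^{\tau\abs{j}^{1/s}}e^{\tau\abs{k-j}^{1/s}}$, a convolution estimate of $\mathcal{T}_{11}$ type, and finally the a priori bound $\norm{u^\nu}_{\mathcal{G}^s_{r,\tau(t)}}\leq G_T$ from Proposition \ref{proposition1}. The only difference is presentational: you solve for $\hat p^\nu_k$ pointwise and take the weighted $\ell^2$ norm via Young's inequality, whereas the paper pairs the equation with $\Lambda^{2r}e^{2\tau\Lambda^{1/s}}p^\nu$ and then divides by $\norm{p^\nu}_{\mathcal{G}^s_{r+1,\tau}}$ --- equivalent estimates, with yours having the minor advantage of not dividing by a norm whose finiteness must be known a priori.
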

\begin{proof}
To study the pressure $p^\nu$, the existence is obvious results from standard elliptic equation theory. We consider the regularity of $p^\nu(t,x)$. First we apply the operator $\Lambda^r e^{\tau\Lambda^{1/s}}$ on both sides of \eqref{new2} and then take $L^2$ inner product with $\Lambda^{r}e^{\tau \Lambda^{1/s}}p^\nu$ to obtain
\begin{equation}\label{3.15}
\begin{split}
-\inner{\Delta p^\nu,\Lambda^{2r}e^{2\tau \Lambda^{1/s}}p^\nu}=\inner{\nabla\cdot(u^\nu\cdot\nabla u^\nu),\Lambda^{2r}e^{2\tau \Lambda^{1/s}}p^\nu}.
\end{split}
\end{equation}
Here if we can write $p^\nu(t,x)=\sum_{j\in\mathbb{Z}^3}\hat p^{\nu}_{j}e^{i j\cdot x}$, then the left side of \eqref{3.15} is
$$
-\inner{\Delta p^\nu,\Lambda^{2r}e^{2\tau \Lambda^{1/s}}p^\nu}= \sum_{j\in\mathbb{Z}^3} \abs{j}^{2r+2} e^{2\tau \abs{j}^{1/s}} \abs{\hat p^\nu_j}^2=\norm{p^\nu}_{\mathcal{G}^s_{r+1,\tau}}^2.
$$
The right hand side of \eqref{3.15} can be bounded by
\begin{equation*}
\begin{split}
   &\quad \abs{\inner{\nabla\cdot(u^\nu\cdot\nabla u^\nu),\Lambda^{2r}e^{2\tau \Lambda^{1/s}}p^\nu}}\\
   &=\abs{(2\pi)^3\sum_{k\in\mathbb{Z}^3}\sum_{j\in\mathbb{Z}^3}\abs{k}^{r-1}\left[\hat u^{\nu}_{j} \cdot (k-j)\right](k \cdot \hat u^\nu_{k-j})\hat p^\nu_{-k} \abs{k}^{r+1}e^{2\tau \abs{k}^{1/s}}}\\
   &=\abs{(2\pi)^3\sum_{k\in\mathbb{Z}^3}\sum_{j\in\mathbb{Z}^3}\abs{k}^{r-1}\left[\hat u^{\nu}_{j} \cdot (k-j)\right](j \cdot \hat u^\nu_{k-j})\hat p^\nu_{-k} \abs{k}^{r+1}e^{2\tau \abs{k}^{1/s}}}\\
   &\leq C\norm{p^\nu}_{\mathcal{G}^s_{r+1,\tau}}\\
   &\times\left\{\sum_{k\in\mathbb{Z}^3}\left[\sum_{j\in\mathbb{Z}^3}(\abs{j}^{r-1}+\abs{k-j}^{r-1})\abs{j}\abs{k-j}\abs{\hat u^\nu_{j}}e^{\tau \abs{j}^{1/s}}e^{\tau \abs{k}^{1/s}}\abs{\hat u^\nu_{k-j}}\right]^2\right\}^{1/2}\\
   &\leq C \norm{u^\nu}_{\mathcal{G}^s_{r,\tau}}^2 \norm{p^\nu}_{\mathcal{G}^s_{r+1,\tau}},
\end{split}
\end{equation*}
where $C$ is a constant depending on $r$. Then from above estimate, we obtain
\begin{equation*}
\norm{p^\nu}_{\mathcal{G}^s_{r+1,\tau}} \leq C\norm{u^\nu}_{\mathcal{G}^s_{r,\tau}}^2.
\end{equation*}
From \eqref{3.6}, we obtain
\begin{equation*}
\norm{p^\nu(t)}_{\mathcal{G}^s_{r+1,\tau}} \leq C A(t)^2\leq CG_T^2,\ t\in[0,T].
\end{equation*}
For the pressure $p(t,x)$ of Euler equation \eqref{1.2}, one can first obtain the following elliptic equation,
\begin{equation*}
\begin{aligned}
 -\Delta p =\nabla\cdot(u\cdot\nabla u).\\
\end{aligned}
\end{equation*}
Then using the same arguments as above, one can obtain the same results for $p$.
\end{proof}

\section{Convergence of solutions in Gevrey space}\label{section4}

In the previous Section we have proved the existence of solutions to the Navier-Stokes equation and Euler equation in Gevrey class space. In this Section we will show the vanishing viscosity limit of Navier-Stokes equation in Gevrey class space.  Moreover, we can obtain the converging rate with respect to $\nu$.

\begin{theorem}
Let $u^\nu, p^\nu$ and $u, p$ are the solutions we have obtained in the previous Section, where
 $$
  u^\nu,u\in L^\infty([0,T],\mathcal{G}^s_{r,\tau(\,\cdot\,)}),\quad p^\nu,p\in L^\infty([0,T],\mathcal{G}^s_{r+1,\tau(\,\cdot\,)}).
 $$
Then the following estimates hold,
\begin{equation}\label{new4}
 \norm{u^\nu(t,\cdot)-u(t,\cdot)}_{\mathcal{G}^s_{r-1,\tau(t)}}\leq C\sqrt{\nu},\quad \norm{p^\nu(t,\cdot)-p(t,\cdot)}_{\mathcal{G}^s_{r,\tau(t)}}\leq C\sqrt{\nu},
\end{equation}
for any $0<t\leq T$, where $C$ is a constant depending on $r,s,a,T$.
\end{theorem}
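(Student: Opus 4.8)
The plan is to derive an evolution equation for the difference of the two solutions and carry out a Gevrey-class energy estimate on it. Set $w=u^\nu-u$ and $q=p^\nu-p$. Subtracting \eqref{PE} from \eqref{PNS} gives
\begin{equation*}
\frac{d}{dt}w+\nu A u^\nu+\mathbb{P}(u^\nu\cdot\nabla u^\nu)-\mathbb{P}(u\cdot\nabla u)=0,\qquad w|_{t=0}=0.
\end{equation*}
The nonlinear terms split in the standard way as $u^\nu\cdot\nabla u^\nu-u\cdot\nabla u=w\cdot\nabla u^\nu+u\cdot\nabla w$, and I would rewrite the viscous contribution as $\nu A u^\nu=\nu A w+\nu A u$. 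Since we want a bound in $\mathcal{G}^s_{r-1,\tau(t)}$, the energy functional to differentiate is $\norm{w(t,\cdot)}_{\mathcal{G}^s_{r-1,\tau(t)}}^2$; applying $\Lambda^{r-1}e^{\tau\Lambda^{1/s}}$ and pairing with $\Lambda^{r-1}e^{\tau\Lambda^{1/s}}w$ in $L^2$ produces the identity
\begin{equation*}
\frac{1}{2}\frac{d}{dt}\norm{w}_{\mathcal{G}^s_{r-1,\tau}}^2+\nu\norm{w}_{\mathcal{G}^s_{r,\tau}}^2=\tau'(t)\norm{w}_{\mathcal{G}^s_{r-1+\frac{1}{2s},\tau}}^2-\inner{\Lambda^{r-1}e^{\tau\Lambda^{1/s}}(w\cdot\nabla u^\nu+u\cdot\nabla w),\Lambda^{r-1}e^{\tau\Lambda^{1/s}}w}-\nu\inner{\Lambda^{r-1}e^{\tau\Lambda^{1/s}}Au,\Lambda^{r-1}e^{\tau\Lambda^{1/s}}w}.
\end{equation*}

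Next I would estimate each term on the right. The two nonlinear pairings are handled by the same Fourier-commutator technique used for Lemma \ref{Lemma3}: the cancellation $\inner{u\cdot\nabla\Lambda^{r-1}e^{\tau\Lambda^{1/s}}w,\Lambda^{r-1}e^{\tau\Lambda^{1/s}}w}=0$ removes the top-order piece, and the remaining commutators are bounded by $\norm{\cdot}_{H^r}$ and Gevrey norms of $u,u^\nu,w$, which are all controlled by $G_T$ from Proposition \ref{proposition1}. The decay term $\tau'(t)\norm{w}_{\mathcal{G}^s_{r-1+\frac{1}{2s},\tau}}^2$ is negative (since $\tau$ is decreasing) and I expect it to absorb the highest-derivative commutator contributions exactly as \eqref{3.3} absorbs them in the existence proof; this is the mechanism that keeps the estimate closed without losing a derivative. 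The forcing term $\nu\inner{\Lambda^{r-1}e^{\tau\Lambda^{1/s}}Au,\Lambda^{r-1}e^{\tau\Lambda^{1/s}}w}$ is the source of the $\sqrt\nu$ rate: by Cauchy--Schwarz it is bounded by $\nu\norm{u}_{\mathcal{G}^s_{r+1,\tau}}\norm{w}_{\mathcal{G}^s_{r,\tau}}$, and I would use Young's inequality to split it as $\frac{\nu}{2}\norm{w}_{\mathcal{G}^s_{r,\tau}}^2+\frac{\nu}{2}\norm{u}_{\mathcal{G}^s_{r+1,\tau}}^2$, where the first piece is absorbed by the dissipation $\nu\norm{w}_{\mathcal{G}^s_{r,\tau}}^2$ on the left and the second piece integrates in time against the $\nu$-weighted bound $\nu\int_0^t\norm{u}_{\mathcal{G}^s_{r+1,\tau}}^2\le M_T$ furnished by Proposition \ref{proposition1}.

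Collecting these bounds yields a differential inequality of the form $\frac{d}{dt}\norm{w}_{\mathcal{G}^s_{r-1,\tau}}^2\le C\norm{w}_{\mathcal{G}^s_{r-1,\tau}}^2+C\nu\norm{u}_{\mathcal{G}^s_{r+1,\tau}}^2$, and since $w|_{t=0}=0$, Gronwall's inequality combined with $\nu\int_0^T\norm{u}_{\mathcal{G}^s_{r+1,\tau}}^2\,ds\le M_T$ gives $\norm{w(t,\cdot)}_{\mathcal{G}^s_{r-1,\tau(t)}}^2\le C\nu$, i.e. the first estimate in \eqref{new4}. For the pressure, I would subtract the two elliptic problems to get $-\Delta q=\nabla\cdot(u^\nu\cdot\nabla u^\nu-u\cdot\nabla u)=\nabla\cdot(w\cdot\nabla u^\nu+u\cdot\nabla w)$ and repeat the elliptic Gevrey estimate of Proposition \ref{proposition}, which produces $\norm{q}_{\mathcal{G}^s_{r,\tau}}\le C(\norm{u^\nu}_{\mathcal{G}^s_{r,\tau}}+\norm{u}_{\mathcal{G}^s_{r,\tau}})\norm{w}_{\mathcal{G}^s_{r-1,\tau}}\le C\sqrt\nu$ after inserting the velocity bound just obtained. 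The main obstacle I anticipate is the bookkeeping in the nonlinear commutator estimates at regularity level $r-1$: one must verify that the decreasing-radius term $\tau'(t)\norm{w}_{\mathcal{G}^s_{r-1+\frac{1}{2s},\tau}}^2$ genuinely dominates every half-derivative-gaining commutator (the analogues of $\mathcal{T}_{14},\mathcal{T}_{21},\mathcal{T}_{22}$), since those terms now involve the \emph{mixed} products of $w$ with $u$ and $u^\nu$ rather than a single function, and the summability exponent $r>\frac{9}{2}$ must still leave enough room after dropping one derivative.
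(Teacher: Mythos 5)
Your overall strategy---an energy estimate for $w=u^\nu-u$ in $\mathcal{G}^s_{r-1,\tau(t)}$, cancellation plus absorption by the $\tau'$ term for the convective commutators, and an elliptic Gevrey estimate for the pressure difference---is the same as the paper's, and most of it goes through. But there is a genuine gap, and it sits exactly at the step you identify as the source of the $\sqrt\nu$ rate. You split the viscous term as $\nu Au^\nu=\nu Aw+\nu Au$ and propose to control the forcing $\nu\inner{\Lambda^{r-1}e^{\tau\Lambda^{1/s}}Au,\Lambda^{r-1}e^{\tau\Lambda^{1/s}}w}$ by Young's inequality together with the bound $\nu\int_0^t\norm{u}_{\mathcal{G}^s_{r+1,\tau}}^2ds\le M_T$, which you say is furnished by Proposition \ref{proposition1}. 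It is not: that proposition furnishes this bound only for the Navier--Stokes solution $u^\nu$, where it comes from integrating the dissipation term $\nu\norm{u^\nu}_{\mathcal{G}^s_{r+1,\tau}}^2$ in the viscous energy inequality \eqref{3.4}. For the Euler solution $u$ there is no dissipation term to integrate, and Proposition \ref{proposition1} asserts only $\norm{u(t)}_{\mathcal{G}^s_{r,\tau(t)}}\le G_T$. Nothing in the paper gives square-integrability of $\norm{u(t)}_{\mathcal{G}^s_{r+1,\tau(t)}}$ with the \emph{same} radius $\tau(t)$, and it does not follow from soft arguments: membership in $\mathcal{G}^s_{r,\tau}$ does not control an extra full derivative at the same Gevrey radius, and the only extra room created by the shrinking radius is the $\tau'$ term, which gains $\tfrac{1}{2s}$ of a derivative, not a whole one. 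As written, your Gronwall step therefore invokes a quantity that may be infinite.

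The gap is repairable in two ways. The paper's way is to not split the viscous term at all: keep $\mathcal{I}_1=\nu\inner{\Lambda^{r-1}e^{\tau\Lambda^{1/s}}\Delta u^\nu,\Lambda^{r-1}e^{\tau\Lambda^{1/s}}w}$, bound it by $C\nu\norm{u^\nu}_{\mathcal{G}^s_{r+1,\tau}}\norm{w}_{\mathcal{G}^s_{r-1,\tau}}$ (the balanced Cauchy--Schwarz splits of the weight $\abs{k}^{2r}$ are $(r+1,r-1)$ or $(r,r)$; your stated bound $\norm{u}_{\mathcal{G}^s_{r+1,\tau}}\norm{w}_{\mathcal{G}^s_{r,\tau}}$ is an overestimate), run Gronwall on the first power $\norm{w}_{\mathcal{G}^s_{r-1,\tau}}$, and finish with Cauchy--Schwarz in time, $\int_0^t\nu\norm{u^\nu}_{\mathcal{G}^s_{r+1,\tau}}ds\le\sqrt{\nu}\,t^{1/2}M_T^{1/2}$, where $M_T$ is now legitimately the Navier--Stokes bound \eqref{3.13}. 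Alternatively, your splitting can be saved by distributing the Fourier weights evenly in the forcing term: $\nu\abs{\inner{\Lambda^{r-1}e^{\tau\Lambda^{1/s}}Au,\Lambda^{r-1}e^{\tau\Lambda^{1/s}}w}}\le\nu\norm{u}_{\mathcal{G}^s_{r,\tau}}\norm{w}_{\mathcal{G}^s_{r,\tau}}\le\tfrac{\nu}{2}\norm{w}_{\mathcal{G}^s_{r,\tau}}^2+\tfrac{\nu}{2}G_T^2$, where the first piece is absorbed by the dissipation $\nu\norm{w}_{\mathcal{G}^s_{r,\tau}}^2$ coming from $\nu Aw$ and the second is a constant forcing of size $O(\nu)$; Gronwall then gives $\norm{w(t)}_{\mathcal{G}^s_{r-1,\tau(t)}}^2\le C\nu$ using only the $L^\infty_t$ bound of Proposition \ref{proposition1}, avoiding $M_T$ altogether. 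One minor further correction: the cancellation trick applies only to the term $u\cdot\nabla w$ (the paper's $\mathcal{I}_3$); the term $w\cdot\nabla u^\nu$ needs no cancellation, since the derivative falls on $u^\nu$ and it is bounded directly by $C\norm{u^\nu}_{\mathcal{G}^s_{r,\tau}}\norm{w}_{\mathcal{G}^s_{r-1,\tau}}^2$.
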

\begin{proof}
Let us first consider the new equation for $w=(u^\nu-u)$ and $\tilde{p}=p^\nu-p$,
\begin{equation}\label{3.28}
\left\{
\begin{split}
\frac{\partial w}{\partial t}-\nu \Delta u^\nu+ w\cdot\nabla u^\nu+u\cdot\nabla w+\nabla\tilde{p} &=0,\\
\nabla\cdot w &=0,\\
w|_{t=0}      &=0.
\end{split}
\right.
\end{equation}
Then we apply the operator $\Lambda^{r-1}e^{\tau\Lambda^{1/s}}$ on both sides of \eqref{3.28} and take the $L^2$ inner product with $\Lambda^{(r-1)}e^{\tau \Lambda^{1/s}} w$ on both sides to obtain,
\begin{equation}\label{3.29}
\begin{split}
\frac{1}{2}\frac{d}{dt}\norm{w(t)}_{\mathcal{G}^{s}_{r-1,\tau}}^2 &=\nu\inner{\Lambda^{(r-1)}e^{\tau \Lambda^{1/s}}\Delta u^\nu,\Lambda^{(r-1)}e^{\tau \Lambda^{1/s}}w}+{\tau^\prime}\norm{w}_{\mathcal{G}^s_{r-1+\frac{1}{2s},\tau}}^2\\
&\quad
-\inner{\Lambda^{(r-1)}e^{\tau \Lambda^{1/s}}(w\cdot\nabla u^\nu),\Lambda^{(r-1)}e^{\tau \Lambda^{1/s}} w}\\
&\quad-\inner{\Lambda^{(r-1)}e^{\tau \Lambda^{1/s}}(u\cdot\nabla w),\Lambda^{(r-1)}e^{\tau \Lambda^{1/s}} w},
\end{split}
\end{equation}
where the term $\inner{\Lambda^{(r-1)}e^{\tau \Lambda^{1/s}}\nabla\tilde{p},\, \Lambda^{(r-1)}e^{\tau \Lambda^{1/s}}w}$ disappeares since $w=u^\nu-u$ is divergence free. It remains to estimate the right hand side of \eqref{3.29}, for convenience, we denote
\begin{equation*}
\begin{aligned}
 \mathcal{I}_1 &=\nu\inner{\Lambda^{(r-1)}e^{\tau \Lambda^{1/s}}\Delta u^\nu,\Lambda^{(r-1)}e^{\tau\Lambda^{1/s}}w},\\
 \mathcal{I}_2 &=\inner{\Lambda^{(r-1)}e^{\tau \Lambda^{1/s}}(w\cdot\nabla u^\nu),\Lambda^{(r-1)}e^{\tau\Lambda^{1/s}}w},\\
 \mathcal{I}_3 &=\inner{\Lambda^{(r-1)}e^{\tau \Lambda^{1/s}}(u\cdot\nabla w),\Lambda^{(r-1)}e^{\tau\Lambda^{1/s}}w}.
\end{aligned}
\end{equation*}
Using the discrete H\"older inequality, one can obtain
\begin{equation}\label{3.30}
\begin{aligned}
 \abs{\mathcal{I}_1} &=\nu\abs{\inner{\Lambda^{(r-1)}e^{\tau \Lambda^{1/s}}\Delta u^\nu,\Lambda^{(r-1)}e^{\tau \Lambda^{1/s}}w}} \\
  &=\nu\abs{-(2\pi)^3\sum_{k\in\mathbb{Z}^3} \abs{k}^{2r}e^{2\tau \abs{k}^{1/s}}(\hat u^\nu_k\cdot \hat w_{-k})}\\
  &\leq \nu(2\pi)^3\sum_{k\in\mathbb{Z}^3} \abs{k}^{r+1}e^{\tau \abs{k}^{1/s}}\abs{\hat u^\nu_k}\abs{k}^{r-1}e^{\tau \abs{k}^{1/s}}\abs{\hat w_{-k}}\\
  &\leq C\nu\norm{u^\nu}_{\mathcal{G}^s_{r+1,\tau}}\norm{w}_{\mathcal{G}^s_{r-1,\tau}}.
\end{aligned}
\end{equation}
Then we have
\begin{equation*}
 \abs{\mathcal{I}_1} \leq C\nu\norm{u^\nu}_{\mathcal{G}^s_{r+1,\tau}}\norm{w}_{\mathcal{G}^s_{r-1,\tau}}.
\end{equation*}
As for $\mathcal{I}_2$, we first write it into the sum of their Fourier coefficients,
\begin{equation*}
\begin{split}
  \mathcal{I}_2=i(2\pi)^3\sum_{k\in\mathbb{Z}^3}\sum_{j\in\mathbb{Z}^3}\left[\hat w_j\cdot(k-j)\right](\hat u^\nu_{k-j}\cdot\hat w_{-k})\abs{k}^{2(r-1)}e^{2\tau\abs{k}^{1/s}}.\\
\end{split}
\end{equation*}
Since $r>9/2$, there exists a constant C such that
$$
\abs{k}^{r-1}\leq C\left(\abs{j}^{r-1}+\abs{k-j}^{r-1}\right),
$$
and $s\geq1$ implies
$$
e^{\tau\abs{k}^{1/s}}\leq e^{\tau\abs{j}^{1/s}} e^{\tau\abs{k-j}^{1/s}}.
$$
Thus $\mathcal{I}_2$ can be bounded by
\begin{equation*}
\begin{aligned}
 \abs{\mathcal{I}_2} &\leq C\sum_{k\in\mathbb{Z}^3}\sum_{j\in\mathbb{Z}^3}\bigg[(\abs{j}^{r-1}+\abs{k-j}^{r-1})\abs{\hat w_j}\abs{k-j}\abs{\hat u^\nu_{k-j}}e^{\tau\abs{j}^{1/s}}e^{\tau\abs{k-j}^{1/s}}\\
 &\quad\times \abs{\hat w_{-k}}\abs{k}^{r-1}e^{\tau\abs{k}^{1/s}} \bigg].
\end{aligned}
\end{equation*}
Then by discrete H\"older inequality and Minkowski inequality, we obtain
\begin{equation}\label{3.31}
 \abs{\mathcal{I}_2}\leq C\norm{u^\nu}_{\mathcal{G}^s_{r,\tau}}\norm{w}_{\mathcal{G}^s_{r-1,\tau}}^2.
\end{equation}
As for $\mathcal{I}_3$, where
\begin{equation*}
\begin{aligned}
 \mathcal{I}_3=i(2\pi)^3\sum_{k\in\mathbb{Z}^3}\sum_{j\in\mathbb{Z}^3}[\hat u_j\cdot(k-j)](\hat w_{k-j}\cdot\hat w_{-k})\abs{k}^{2(r-1)}e^{2\tau\abs{k}^{1/s}}.
\end{aligned}
\end{equation*}
Here again the cancellation property implies
\begin{equation*}
\begin{aligned}
0 &=\inner{u\cdot\nabla \Lambda^{r-1}e^{\tau\Lambda^{1/s}}w,\Lambda^{r-1}e^{\tau\Lambda^{1/s}}w}\\
  &=i(2\pi)^3\sum_{k\in\mathbb{Z}^3}\sum_{j\in\mathbb{Z}^3}[\hat u_j\cdot(k-j)](\hat w_{k-j}\cdot\hat w_{-k})\abs{k-j}^{r-1}e^{\tau\abs{k-j}^{1/s}}\abs{k}^{r-1}e^{\tau\abs{k}^{1/s}}.
\end{aligned}
\end{equation*}
Then we have
\begin{equation*}
\begin{split}
\mathcal{I}_3 &=i(2\pi)^3\sum_{k\in\mathbb{Z}^3}\sum_{j\in\mathbb{Z}^3}[\hat u_j\cdot(k-j)](\hat w_{k-j}\cdot\hat w_{-k})\abs{k}^{2(r-1)}e^{2\tau\abs{k}^{1/s}}\\
 &\quad -i(2\pi)^3\sum_{k\in\mathbb{Z}^3}\sum_{j\in\mathbb{Z}^3}[\hat u_j\cdot(k-j)](\hat w_{k-j}\cdot\hat w_{-k})\abs{k-j}^{r-1}e^{\tau\abs{k-j}^{1/s}}\abs{k}^{(r-1)}e^{\tau\abs{k}^{1/s}}\\
 &=\mathcal{R}_1+\mathcal{R}_2,
\end{split}
\end{equation*}
where we denote
\begin{equation*}
\begin{aligned}
\mathcal{R}_1 =i(2\pi)^3\sum_{k\in\mathbb{Z}^3}\sum_{j\in\mathbb{Z}^3} &\bigg\{[\hat u_j\cdot(k-j)](\hat w_{k-j}\cdot\hat w_{-k})\left(\abs{k}^{r-1}-\abs{k-j}^{r-1}\right)e^{\tau\abs{k}^{1/s}}\\
&\quad \times\abs{k}^{r-1}e^{\tau\abs{k}^{1/s}}\bigg\},
\end{aligned}
\end{equation*}
and
\begin{equation*}
\begin{aligned}
\mathcal{R}_2= i(2\pi)^3\sum_{k\in\mathbb{Z}^3}\sum_{j\in\mathbb{Z}^3}& \bigg\{[\hat u_j\cdot(k-j)](\hat w_{k-j}\cdot\hat w_{-k})\abs{k-j}^{r-1}\left(e^{\tau\abs{k}^{1/s}}-e^{\tau\abs{k-j}^{1/s}}\right)\\
 &\quad \times\abs{k}^{r-1}e^{\tau\abs{k}^{1/s}} \bigg\}.
\end{aligned}
\end{equation*}
Here we used a different strategy in the split of $\abs{k}^{r-1}e^{\tau\abs{k}^{1/s}}-\abs{k-j}^{r-1}e^{\tau\abs{k-j}^{1/s}}$ as in Lemma \ref{Lemma3} to estimate $\mathcal{R}_1$ and $\mathcal{R}_2$. With use of the following mean value theorem, there exists a constant $\theta\in(0,1)$ such that
\begin{equation*}
\begin{aligned}
 \abs{\abs{k}^{r-1}-\abs{k-j}^{r-1}} &=\abs{(r-1)(\abs{k}-\abs{k-j})\big[\theta\abs{k}+(1-\theta)\abs{k-j} \big]^{r-2}}\\
                               &\leq C\abs{j}(\abs{k}^{r-2}+\abs{k-j}^{r-2}).
\end{aligned}
\end{equation*}
Then by discrete H\"older inequality and Minkowski inequality, we have
\begin{equation*}
\begin{aligned}
\abs{\mathcal{R}_1} &\leq C\sum_{k\in\mathbb{Z}^3}\sum_{j\in\mathbb{Z}^3}\bigg[\abs{\hat u_j}\abs{k-j}\abs{\hat w_{k-j}}\abs{\hat w_{-k}}\abs{j}(\abs{j}^{r-2}+\abs{k-j}^{r-2})\\
 &\quad \times e^{\tau \abs{j}^{1/s}}e^{\tau\abs{k-j}^{1/s}}\abs{k}^{r-1}e^{\tau\abs{k}^{1/s}}\bigg]\\
 &\leq C\norm{u}_{\mathcal{G}^s_{r,\tau}}\norm{w}_{\mathcal{G}^s_{r-1,\tau}}^2.
\end{aligned}
\end{equation*}
As for $\mathcal{R}_2$, we use the inequality $\abs{e^\xi-1}\leq \abs{\xi}e^{\abs{\xi}}$ for $\forall \xi\in\mathbb{R}$ and Lemma \ref{Lemma2},
\begin{equation*}
\begin{aligned}
 &\abs{\mathcal{R}_2}\leq C\sum_{k\in\mathbb{Z}^3}\sum_{j\in\mathbb{Z}^3}\left[ \abs{\hat u_j} \abs{\hat w_{k-j}} \abs{\hat w_{-k}} \abs{k-j}^r e^{\tau\abs{k-j}^{1/s}}\abs{e^{\tau (\abs{k}^{1/s}-\abs{k-j}^{1/s})}-1}\right.\\
 &\quad\left.\times \abs{k}^{r-1}e^{\tau \abs{k}^{1/s}}\right]\\
 & \leq C\tau \sum_{k\in\mathbb{Z}^3}\sum_{j\in\mathbb{Z}^3}\bigg[ \abs{\hat u_j} \abs{\hat w_{k-j}} \abs{\hat w_{-k}} \abs{k-j}^{r-1} e^{\tau\abs{k-j}^{1/s}}e^{\tau \abs{j}^{1/s}} \frac{\abs{j}\abs{k-j}}{\abs{k}^{1-\frac{1}{s}}+\abs{k-j}^{1-\frac{1}{s}}}\\
 &\quad \times \abs{k}^{r-1}e^{\tau\abs{k}^{1/s}}\bigg].
\end{aligned}
\end{equation*}
For here we use the inequality $\abs{k-j}\leq 2\abs{k}\abs{j}$ for $k,j\neq 0$, then we have
\begin{equation*}
 \begin{aligned}
   \frac{\abs{k-j}}{\abs{k}^{1-\frac{1}{s}}+\abs{k-j}^{1-\frac{1}{s}}} &\leq \abs{k-j}^{1/s}\\
                                                                       &\leq C\abs{k-j}^{\frac{1}{2s}}\abs{k}^{\frac{1}{2s}}
                                                                        \abs{j}^{\frac{1}{2s}},
 \end{aligned}
\end{equation*}
where $C$ is a constant depending on $s$. Thus $\mathcal{R}_2$ can be bounded as follows,
\begin{equation*}
\begin{aligned}
 \abs{\mathcal{R}_2} &\leq C\tau \sum_{k\in\mathbb{Z}^3}\sum_{j\in\mathbb{Z}^3}\left[ \abs{\hat u_j} \abs{\hat w_{k-j}} \abs{\hat w_{-k}} \abs{k-j}^{r-1+\frac{1}{2s}} e^{\tau\abs{k-j}^{1/s}}e^{\tau \abs{j}^{1/s}} \abs{j}^{1+\frac{1}{2s}}\right.\\
 &\quad \left.\times \abs{k}^{r-1+\frac{1}{2s}}e^{\tau\abs{k}^{1/s}}\right]\\
 &\leq C\tau \sum_{k\in\mathbb{Z}^3}\sum_{j\in\mathbb{Z}^3}\left[ \abs{\hat u_j} \abs{\hat w_{k-j}} \abs{\hat w_{-k}} \abs{k-j}^{r-1+\frac{1}{2s}} e^{\tau\abs{k-j}^{1/s}}(1+\tau \abs{j}^{1/s}e^{\tau \abs{j}^{1/s}}) \right.\\
 &\quad \left.\times \abs{j}^{1+\frac{1}{2s}}\abs{k}^{r-1+\frac{1}{2s}}e^{\tau\abs{k}^{1/s}}\right]\\
 &\leq C\tau\norm{u}_{H^r}\norm{w}_{\mathcal{G}^s_{r-1+\frac{1}{2s},\tau}}^2+
  C\tau^2\norm{u}_{\mathcal{G}^s_{r,\tau}}\norm{w}_{\mathcal{G}^s_{r-1+\frac{1}{2s},\tau}}^2,
\end{aligned}
\end{equation*}
where we use the inequality $e^\xi\leq 1+\xi e^\xi$ for $\forall \xi\in\mathbb{R}^+$ with respect to $e^{\tau\abs{j}^{1/s}}$ and also the discrete H\"older inequality and Minkowski inequality in the above inequality.
Then we have
\begin{equation}\label{3.32}
\begin{split}
\abs{\mathcal{I}_3} \leq C\norm{u}_{\mathcal{G}^s_{r,\tau}}\norm{w}_{\mathcal{G}^s_{r-1,\tau}}^2+
C\tau\norm{u}_{H^r}\norm{w}_{\mathcal{G}^s_{r-1+\frac{1}{2s},\tau}}^2+C\tau^2\norm{u}_{\mathcal{G}^s_{r,\tau}}
\norm{w}_{\mathcal{G}^s_{r-1+\frac{1}{2s},\tau}}^2.
\end{split}
\end{equation}
Substituting \eqref{3.30}, \eqref{3.31} and \eqref{3.32} into \eqref{3.29}, we obtain
\begin{equation}\label{3.33}
\begin{split}
\frac{1}{2}\frac{d}{dt}\norm{w}_{\mathcal{G}^s_{r-1,\tau}}^2 &\leq \nu\norm{u^\nu}_{\mathcal{G}^s_{r+1,\tau}}\norm{w}_{\mathcal{G}^s_{r-1,\tau}}+
C\norm{u}_{\mathcal{G}^s_{r,\tau}}\norm{w}_{\mathcal{G}^s_{r-1,\tau}}^2\\
&\quad+\left(\tau^\prime+C\tau\norm{u}_{H^r}+C\tau^2\norm{u}_{\mathcal{G}^s_{r,\tau}}\right)
\norm{w}_{\mathcal{G}^s_{r-1+\frac{1}{2s},\tau}}^2.\\
\end{split}
\end{equation}
By the choice of $\tau$ in \eqref{3.10}, and noting that \eqref{3.6},\eqref{3.7},\eqref{3.8},\eqref{3.9} also hold for Euler equation \eqref{1.2}. Then choosing the appropriate constant $C$, one has $\tau^\prime+C\tau\norm{u}_{H^r}+C\tau^2\norm{u}_{H^r}+C\tau^2\norm{u}_{\mathcal{G}^s_{r,\tau}}\leq 0$, then we can obtain from \eqref{3.33} and \eqref{Abound},
\begin{equation}\label{3.34}
\frac{d}{dt}\norm{w(t,\cdot)}_{\mathcal{G}^{s}_{r-1,\tau(t)}}\leq \nu\norm{u^\nu(t,\cdot)}_{\mathcal{G}^s_{r+1,\tau(t)}}+CG_T\norm{w(t,\cdot)}_{\mathcal{G}^s_{r-1,\tau(t)}},\quad 0<t\leq T.
\end{equation}
Since $w(0)=0$ and Grownwall's inequality, \eqref{3.34} implies
\begin{equation*}
\begin{split}
\norm{w(t,\cdot)}_{\mathcal{G}^{s}_{r-1,\tau(t)}} \leq e^{CG_T t}\int_0^t \nu\norm{u^\nu(s,\cdot)}_{\mathcal{G}^{s}_{r+1,\tau(s)}}ds,\quad 0<t\leq T.
\end{split}
\end{equation*}
Recalling from \eqref{3.13} we have for $0<t\leq T$,
$$
\int_0^t \nu \norm{u^\nu(s,\cdot)}_{\mathcal{G}^s_{r+1,\tau(s)}}^2ds\leq M_T,\ \ t\in (0,T].
$$
With H\"older inequality, we have
\begin{equation*}
\begin{split}
\int_0^t \nu \norm{u^\nu(s,\cdot)}_{\mathcal{G}^s_{r+1,\tau(s)}}ds &\leq \int_0^t \sqrt{\nu} \sqrt{\nu}\norm{u^\nu(s,\cdot)}_{\mathcal{G}^s_{r+1,\tau(s)}}ds\\
 &\leq \sqrt{\nu}t^{1/2} M_T^{1/2}. \\
\end{split}
\end{equation*}
Then we have
\begin{equation}\label{3.36}
\norm{w(t,\cdot)}_{\mathcal{G}^s_{r-1,\tau(t)}} \leq C\sqrt{\nu}M_T^{1/2}t^{1/2}e^{CG_T t},\ \ 0<t\leq T.
\end{equation}
This proves the first estimate of \eqref{new4} by arranging the constant. Then we want to estimate $p^\nu(t)-p(t)$ in the norm of $G^s_{r,\tau}$. To do so, we first take the divergence of both sides of \eqref{3.28} to obtain the following elliptic equation,
\begin{equation}\label{3.37}
-\Delta \tilde{p} = \nabla \cdot (w\cdot\nabla u^\nu) +\nabla \cdot (u\cdot\nabla w).
\end{equation}
Then we first apply the operator $\Lambda^{r-1}e^{\tau\Lambda^{1/s}}$ on both sides of \eqref{3.37} and then take the $L^2$ inner product with $\Lambda^{(r-1)}e^{\tau \Lambda^{1/s}}\tilde{p}$ on both sides to obtain,
\begin{equation}\label{3.38}
\begin{split}
(2\pi)^3\norm{\tilde{p}}_{\mathcal{G}^s_{r,\tau}}^2 &=\inner{\Lambda^{r-1}e^{\tau\Lambda^{1/s}}\nabla\cdot(w\cdot\nabla u_\nu),\Lambda^{(r-1)}e^{\tau\Lambda^{1/s}}\tilde{p}}\\
 &\quad+\inner{\Lambda^{r-1}e^{\tau\Lambda^{1/s}}\nabla\cdot(u\cdot\nabla w),\Lambda^{(r-1)}e^{\tau\Lambda^{1/s}}\tilde{p}}\\
 &=i^2(2\pi)^3\sum_{k\in\mathbb{Z}^3}\sum_{j\in\mathbb{Z}^3}[\hat w_j\cdot (k-j)](k\cdot{\hat u^\nu}_{k-j})\abs{k}^{2(r-1)}e^{2\tau \abs{k}^{1/s}}\hat{\tilde{p}}_{-k}\\
 &\quad+i^2(2\pi)^3\sum_{k\in\mathbb{Z}^3}\sum_{j\in\mathbb{Z}^3}[\hat u_j\cdot (k-j)](k\cdot\hat{w}_{k-j})\abs{k}^{2(r-1)}e^{2\tau \abs{k}^{1/s}}\hat{\tilde{p}}_{-k}\\
 &=\mathcal{P}_1+\mathcal{P}_2,
\end{split}
\end{equation}
where we denote
\begin{align*}
\mathcal{P}_1 &=i^2(2\pi)^3\sum_{k\in\mathbb{Z}^3}\sum_{j\in\mathbb{Z}^3}[\hat w_j\cdot (k-j)](k\cdot\hat u^\nu_{k-j})\abs{k}^{2(r-1)}e^{2\tau \abs{k}^{1/s}}\hat{\tilde{p}}_{-k},\\
\mathcal{P}_2 &=i^2(2\pi)^3\sum_{k\in\mathbb{Z}^3}\sum_{j\in\mathbb{Z}^3}[\hat u_j\cdot (k-j)](k\cdot\hat w_{k-j})\abs{k}^{2(r-1)}e^{2\tau \abs{k}^{1/s}}\hat{\tilde{p}}_{-k}.
\end{align*}
It remains to estimate $\mathcal{P}_1$ and $\mathcal{P}_2$ in \eqref{3.38}. For simplicity we only compute $\mathcal{P}_1$, since $\mathcal{P}_2$ can be estimated in the same way.
\begin{equation}\label{3.39}
\begin{split}
 \abs{\mathcal{P}_1} &=\abs{(2\pi)^3 \sum_{k\in\mathbb{Z}^3}\sum_{j\in\mathbb{Z}^3}[\hat w_j\cdot (k-j)](j\cdot \hat u^\nu_{k-j})\abs{k}^{2(r-1)}e^{2\tau \abs{k}^{1/s}}\hat{\tilde{p}}_{-k}}\\
  &\leq (2\pi)^3\sum_{k\in\mathbb{Z}^3}\sum_{j\in\mathbb{Z}^3}\left[ C(\abs{j}^{r-2}+\abs{k-j}^{r-2})e^{\tau \abs{j}^{1/s}}e^{\tau\abs{k-j}^{1/s}} \abs{k-j}\right.\\
  &\quad\left.\times\abs{j}\abs{\hat w_j}\abs{\hat u^\nu_{k-j}} \abs{k}^r e^{\tau\abs{k}^{1/s}}\abs{\hat{\tilde{p}}_{-k}}\right]\\
  &\leq C\norm{w}_{G^s_{r-1,\tau}}\norm{u^\nu}_{G^s_{r,\tau}}\norm{\tilde{p}}_{G^s_{r,\tau}},
\end{split}
\end{equation}
where we use the fact $k\cdot \hat u^\nu_{k-j}=j\cdot \hat u^\nu_{k-j}$ from the divergence free condition. And, similarly, we can obtain
\begin{equation}\label{3.40}
\begin{split}
 \abs{\mathcal{P}_2} &\leq (2\pi)^3 \sum_{k\in\mathbb{Z}^3}\sum_{j\in\mathbb{Z}^3}\abs{[\hat {u}_j\cdot (k-j)](j\cdot\hat{w}_{k-j})\abs{k}^{2(r-1)}e^{2\tau \abs{k}^{1/s}}\hat{\tilde{p}}_{-k}}\\
  &\leq C\norm{w}_{\mathcal{G}^s_{r-1,\tau}}\norm{u}_{\mathcal{G}^s_{r,\tau}}\norm{\tilde{p}}_{\mathcal{G}^s_{r,\tau}}.
\end{split}
\end{equation}
Substituting \eqref{3.39} and \eqref{3.40} into \eqref{3.38}, we obtain
\begin{equation}\label{3.41}
 \norm{\tilde{p}(t,\cdot)}_{\mathcal{G}^s_{r,\tau(t)}} \leq CG_T\norm{w(t,\cdot)}_{\mathcal{G}^s_{r-1,\tau(t)}},\quad 0<t\leq T.
\end{equation}
Then by \eqref{3.36} and \eqref{3.41} we have
$$
\norm{\tilde{p}(t,\cdot)}_{\mathcal{G}^s_{r,\tau(t)}}\leq C\sqrt{\nu}M_T^{1/2}t^{1/2}G_T e^{CG_T t}, \quad 0\leq t\leq T.
$$
This proves \eqref{new4} by arranging the constants. Thus we have proven Theorem \ref{Theorem1}.
\end{proof}

\bigskip
\noindent{\bf Acknowledgements.} The research of the second author was supported by NSF of China(11422106) and Fok Ying Tung Education Foundation (151001), the research of the first author and the last author is supported partially by
``The Fundamental Research Funds for Central Universities of China".

\end{document}